\newtheorem{theorem}{Theorem}[section]
\newtheorem{lemma}[theorem]{Lemma}
\newtheorem{proposition}[theorem]{Proposition}
\newtheorem{corollary}[theorem]{Corollary}
\theoremstyle{definition}
\newtheorem{question}[theorem]{Question}
\theoremstyle{remark}
\newtheorem*{remark}{Remark}
\numberwithin{equation}{section}
\newcommand{\GL}{{\mathrm {GL}}}
\newcommand{\SL}{{\mathrm {SL}}}
\newcommand{\PSL}{{\mathrm {PSL}}}
\newcommand{\GU}{{\mathrm {GU}}}
\newcommand{\SU}{{\mathrm {SU}}}
\newcommand{\PSU}{{\mathrm {PSU}}}
\newcommand{\SO}{{\mathrm {SO}}}
\newcommand{\PSp}{{\mathrm {PSp}}}
\newcommand{\Aut}{{\mathrm {Aut}}}
\newcommand{\Out}{{\mathrm {Out}}}
\newcommand{\GF}{{\mathrm {GF}}}
\newcommand{\ord}{{\mathrm {o}}}
\newcommand{\Soc}{{\mathrm {Soc}}}
\newcommand{\lcm}{{\mathrm {lcm}}}
\newcommand{\Gal}{{\mathrm {Gal}}}
\newcommand{\Sz}{{\mathrm {Sz}}}
\newcommand{\ZZ}{{\mathbb Z}}
\newcommand{\NN}{{\mathbb N}}
\newcommand{\ta}{\hspace{0.5mm}^{2}\hspace*{-0.2mm}}
\newcommand{\Centralizer}{\mathbf{C}}
\newcommand{\bC}{{\mathbf{C}}}
\newcommand{\bO}{{\mathbf{O}}}
\newcommand{\bF}{{\mathbf{F}}}
\newcommand{\Al}{\textup{\textsf{A}}}
\newcommand{\Sy}{\textup{\textsf{S}}}
\begin{document}

\title[Variations of Landau's theorem]
{Variations of Landau's theorem for $p$-regular\\ and $p$-singular
conjugacy classes}

\author{Alexander Moret\'{o}}
\address{Departament d'\`{A}lgebra, Universitat de Val\`{e}ncia, 46100 Burjassot, Val\`{e}ncia, Spain} \email{alexander.moreto@uv.es}

\author{Hung Ngoc Nguyen}
\address{Department of Mathematics, The University of Akron, Akron,
Ohio 44325, USA} \email{hungnguyen@uakron.edu}

\thanks{The research of the first author was supported by the  Spanish
Ministerio de Ciencia y Tecnolog\'{\i}a, grant MTM2010-15296 and
PROMETEO/Generalitat Valenciana. The second author is partially
supported by the NSA Young Investigator Grant \#H98230-14-1-0293
    and a BCAS Faculty Scholarship Award from the Buchtel College of Arts and Sciences, The University of Akron.}

\subjclass[2010]{Primary 20E45; Secondary 20D06, 20D25}

\keywords{Finite groups, Landau's theorem, conjugacy classes,
$p$-regular classes, $p$-singular classes, solvable radical, Fitting
subgroup}

\date{\today}

\begin{abstract} The well-known Landau's theorem states that, for any
positive integer $k$, there are finitely many isomorphism classes of
finite groups with exactly $k$ (conjugacy) classes. We study
variations of this theorem for $p$-regular classes as well as
$p$-singular classes. We prove several results showing that the
structure of a finite group is strongly restricted by the number of
$p$-regular classes or the number of $p$-singular classes of the
group. In particular, if $G$ is a finite group with $\bO_p(G)=1$
then $|G/\bF(G)|_{p'}$ is bounded in terms of the number of
$p$-regular classes of $G$. However, it is not possible to prove
that there are finitely many groups with no nontrivial normal
$p$-subgroup and $k$ $p$-regular classes without solving some
extremely difficult number-theoretic problems (for instance, we
would need to show that the number of Fermat primes is finite).
\end{abstract}

\maketitle


\section{Introduction and statement of results}
The well-known Landau's theorem~\cite{Landau} states that, for any
positive integer $k$, there are finitely many isomorphism classes of
finite groups with exactly $k$ conjugacy classes or $k$ (ordinary)
irreducible characters. This theorem is indeed an immediate
consequence of the number-theoretic fact proved by Landau himself in
the same paper that the equation $x_1^{-1}+x_2^{-1}+\cdots
+x_k^{-1}=1$ has only finitely many positive integer solutions.
Using the classification of finite simple groups, L.~H\'{e}thelyi
and B.~K\"{u}lshammer~\cite{Hethelyi-Kulshammer} improved this
theorem by replacing all conjugacy classes by only conjugacy classes
of elements of prime power order. A block version of Landau's
theorem was proposed by R.~Brauer in~\cite{Brauer} -- given a
positive integer $k$, are there only finitely many isomorphism
classes of groups which can occur as defect groups of blocks of
finite groups with exactly $k$ irreducible characters? This was
answered affirmatively by K\"{u}lshammer in~\cite{Kulshammer1} for
solvable groups and then in~\cite{Kulshammer2} for $p$-solvable
groups. As far as we know, Brauer's question is still open in
general and indeed, a positive answer to this question follows if
the Alperin-McKay conjecture is correct, as pointed out by
B.~K\"{u}lshammer and G.\,R.~Robinson~\cite{Kulshammer-Robinson}.

Let $G$ be a finite group. Landau's theorem can be restated in the
following form: the order of $G$ is bounded in terms of the class
number of $G$. We remark that the number of classes of $G$ is equal
to the sum of the multiplicities of class sizes as well as the sum
of the multiplicities of character degrees of $G$.
A.~Moret\'{o}~\cite{Moreto} and D.\,A.~Craven~\cite{Craven} have
strengthened Landau's theorem by showing that the order of a finite
group is bounded in terms of the largest multiplicity of its
character degrees. The dual statement for conjugacy classes was
proved only for solvable groups by A.~Jaikin-Zapirain~\cite{Jaikin}
and seems very difficult to prove for arbitrary groups,
see~\cite{Nguyen}.

In this paper, we study a modular version of Landau's theorem. Let
$p$ be a prime. Instead of considering all conjugacy classes, we
consider only $p$-regular classes which are classes of elements of
order not divisible by $p$. The number of $p$-regular classes of $G$
turns out to be equal to the number of irreducible representations
of $G$ over an algebraically closed field of characteristic $p$. We
also study another variation for $p$-singular classes, which are
classes of elements of order divisible by $p$.

Let $k_{p'}(G)$ denote the number of $p$-regular classes of $G$. Of
course, $k_{p'}(G)$ does not say anything about $\bO_p(G)$, so it
makes sense to assume that $\bO_p(G)=1$. The question is: can we
bound $|G|$ in terms of $k_{p'}(G)$? (Or, equivalently, we might
have asked whether $|G/\bO_p(G)|$ is bounded in terms of
$k_{p'}(G)$, without assuming that $\bO_p(G)=1$.) It is easy to see
that obtaining an affirmative answer to this question is very hard.
Let $q=2^n+1$ be a Fermat prime. Consider the Frobenius group
$F_n:=C_q\rtimes C_{q-1}$ and let $\bF(G)$ denote the Fitting
subgroup of $G$. We see that $k_{2'}(F_n)=2$ and
$|F_n/\bF(F_n)|=q-1$. This example shows that if $|G/\bF(G)|$ is
bounded in terms of $k_{2'}(G)$ then there would be finitely many
Fermat primes. Therefore, it does not even seem feasible to bound
$|G/\bF(G)|$. Of course, one could ask for a counterexample to our
question that does not depend on the existence on infinitely many
Fermat primes but, as we will discuss in
Section~\ref{section-example}, our question seems equivalent to some
open (and extremely complicated) questions on prime numbers.

Let $\bO_\infty(G)$ denote the largest normal solvable subgroup of
$G$, which is also referred to as the solvable radical of $G$. Our
first result is the following.

\begin{theorem}\label{main theorem} Let $p$ be a fixed prime and $G$ a
finite group with $\bO_p(G)=1$. If $k=k_{p'}(G)$ denotes the number
of $p$-regular classes of $G$, then $|G/\bO_\infty(G)|$ is
$k$-bounded and $\bO_\infty(G)/\bF(G)$ is metabelian by $k$-bounded.
\end{theorem}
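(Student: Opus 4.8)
The plan is to prove the two assertions separately, controlling the non-solvable quotient $\bar G:=G/\bO_\infty(G)$ by the classification of finite simple groups and the solvable radical by the structure theory of solvable linear groups with few orbits. Throughout I will use three elementary facts: for $N\trianglelefteq H$, Clifford theory for Brauer characters gives that the number of $H$-orbits on $\IBR_p(N)$ is at most $|\IBR_p(H)|=k_{p'}(H)$; $k_{p'}$ is monotone under quotients, $k_{p'}(H/N)\le k_{p'}(H)$ (inflation embeds $\IBR_p(H/N)$ in $\IBR_p(H)$); and a class-counting argument yields $k_{p'}(N)\le |H:N|\,k_{p'}(H)$.

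First I would bound $|\bar G|$. Since $\bO_\infty(\bar G)=1$, its socle is a direct product $\Soc(\bar G)=S_1\times\cdots\times S_m$ of non-abelian simple groups with $\bC_{\bar G}(\Soc(\bar G))=1$, so $\bar G\hookrightarrow\Aut(\Soc(\bar G))$ and it suffices to bound $m$ and each $|S_i|$. To bound $m$, observe that the weight of a character $\psi_1\otimes\cdots\otimes\psi_m\in\IBR_p(\Soc(\bar G))=\prod_i\IBR_p(S_i)$, defined as the number of indices with $\psi_i\neq 1$, is constant on $\bar G$-orbits (permuting factors and applying automorphisms both preserve non-triviality) and attains all $m+1$ values $0,1,\dots,m$; hence $m+1\le k_{p'}(\bar G)\le k$. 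To bound $|S_i|$, I restrict to the weight-one characters nontrivial on a single factor of a fixed isomorphism type $S$: their $\bar G$-orbits number at least the number of $\Aut(S)$-orbits on $\IBR_p(S)\setminus\{1\}$, which is at least $k_{p'}(S)/|\Out(S)|-1$. By the classification, $k_{p'}(S)/|\Out(S)|\to\infty$ as $|S|\to\infty$ (the count of $p$-regular classes grows while $|\Out(S)|$ stays small), so each $|S_i|$ is $k$-bounded. Thus $|\Soc(\bar G)|$ and $|\Out(\Soc(\bar G))|$ are $k$-bounded, and $|G/\bO_\infty(G)|=|\bar G|$ is $k$-bounded.

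For the second assertion set $R:=\bO_\infty(G)$ and $F:=\bF(G)$; then $F=\bF(R)$, and since $\bO_p(R)\le\bO_p(G)=1$ the group $F$ is a $p'$-group. From the first part $|G:R|$ is $k$-bounded, so $k_{p'}(R)\le |G:R|\,k$ is $k$-bounded, whence the number of $R$-orbits on $\IBR_p(F)=\Irr(F)$ is $k$-bounded. Replacing $R$ by $R/\Phi(R)$ leaves the quotient $W:=R/F$ unchanged and does not increase the orbit count (as $\Phi(R)\le F$), so by Gaschütz's theorem I may assume $\Phi(R)=1$; then $F$ is a faithful, completely reducible $W$-module $V$ on which $W$ acts with a $k$-bounded number of orbits on $V$, equivalently (by Brauer's permutation lemma) on $\Irr(V)$.

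The heart of the matter is then the purely module-theoretic statement that a finite solvable group $W$ acting faithfully and completely reducibly on a finite module $V$ with at most $n$ orbits has a metabelian normal subgroup of index bounded in terms of $n$. Decomposing $V$ into homogeneous components, the orbit bound should force the number of components and the imprimitivity/tensor structure on each to be $n$-bounded, reducing to the primitive case; there $W$ normalizes a symplectic-type normal subgroup, and the classification of solvable primitive linear groups shows that, modulo an $n$-bounded part, $W$ embeds into a direct product of semilinear groups $\Gamma L_1(q^{m})$, which is metabelian (a direct product of metabelian groups is metabelian). The main obstacle is precisely this primitive analysis: one must show that a large symplectic-type section would produce more than $n$ orbits, so the only source of unbounded order is the metabelian torus-plus-Galois part $\Gamma L_1(q^m)$ — exactly the mechanism behind the Fermat-prime examples $C_q\rtimes C_{q-1}$, where the unbounded quotient $R/F\cong C_{q-1}$ is abelian. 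Quantifying ``large symplectic part $\Rightarrow$ many orbits'' while permitting genuinely unbounded order on the semilinear part is the delicate estimate.
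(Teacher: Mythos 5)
Your overall architecture coincides with the paper's: part one is the socle analysis (bound the number of simple factors, then bound each factor via $k_{p'}(S)/|\Out(S)|\to\infty$), and part two reduces via $\Phi=1$ and Gasch\"utz to a solvable group acting faithfully and completely reducibly on $\bF(G)$ with a $k$-bounded number of orbits. Your weight argument on $\IBR_p(\Soc(\bar G))$ is a harmless dual of the paper's element-counting argument (the paper takes elements $(x_1,\dots,x_i,1,\dots,1)$ of increasing support). One caveat in part one: the assertion that $k_{p'}(S)/|\Out(S)|\to\infty$ as $|S|\to\infty$ cannot simply be waved at ``the classification''; it is the content of Proposition~\ref{Out theorem}, which the paper proves by combining the Fulman--Guralnick lower bounds on centralizer orders in groups of Lie type with the Babai--P\'alfy--Saxl lower bound on the proportion of $p$-regular elements. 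That input must be supplied, though it is available in the literature.

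The genuine gap is in part two. You correctly isolate the key module-theoretic statement --- a solvable group acting faithfully and irreducibly on a finite module with at most $n$ orbits is metabelian by $n$-bounded --- but you then explicitly leave the primitive case (``large symplectic-type part forces many orbits, while the semilinear part may be unboundedly large'') as an unresolved ``delicate estimate.'' That estimate is precisely the theorem of Seager~\cite{sea}, in the reformulation of Keller~\cite{kel} quoted as Lemma~\ref{kel} and Corollary~\ref{c}: either $|W|$ is $n$-bounded, or $W$ embeds in $\Gamma(r^a)\wr\Sy_l$ with $l$ $n$-bounded, whence $W\cap\Gamma(r^a)^l$ is a metabelian normal subgroup of $n$-bounded index. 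The paper does not reprove this; it cites it. Without that citation (or a complete proof of the primitive analysis you sketch), your argument does not close; with it, the remaining bookkeeping (at most $k$ irreducible summands, a direct product of boundedly many metabelian-by-bounded groups is metabelian-by-bounded) is exactly as in the paper, and your proof becomes essentially identical to theirs.
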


Here we say that $|G/\bO_\infty(G)|$ is \emph{$k$-bounded} or
\emph{bounded in terms of $k$} to mean $|G/\bO_\infty(G)|<f(k)$ for
some real-valued function $f$ on $\NN$. The conclusion that
$\bO_\infty(G)/\bF(G)$ is metabelian by $k$-bounded can be stated in
other words, namely there exists $\bF(G)\subseteq
N\trianglelefteq\bO_\infty(G)$ such that $N/\bF(G)$ is metabelian
and $|\bO_\infty(G)/N|$ is bounded in terms of $k$. Two remarks
regarding Theorem~\ref{main theorem} are in order. Firstly, although
it does not say anything about $\bF(G)$, one easily sees that the
nilpotency class of $\bF(G)$ and even the number of chief factors of
$G$ up to $\bF(G)$ is at most $k_{p'}(G)$. Therefore, the number of
$p$-regular classes of $G$ somehow also controls the structure of
$\bF(G)$. Secondly, we will see in Section~\ref{section-example}
that there is not much room for improvement in Theorem~\ref{main
theorem}.

Though the problem of bounding $|G/\bF(G)|$ is out of reach due to
some difficult number-theoretic questions, we are able to prove that
the $p'$-part of $|G/\bF(G)|$ is indeed bounded in terms of the
number of $p$-regular classes of $G$.

\begin{theorem}\label{main theorem-p'-part} Let $p$ be a fixed prime and $G$ a
finite group with $\bO_p(G)=1$. Then $|G/\bF(G)|_{p'}$ is bounded in
terms of $k_{p'}(G)$.
\end{theorem}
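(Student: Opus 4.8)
The plan is to reduce to the solvable case and then to a counting statement about coprime actions. First I would invoke Theorem~\ref{main theorem}: since $|G/\bO_\infty(G)|$ is already $k$-bounded, it suffices to bound $|\bO_\infty(G)/\bF(G)|_{p'}$. Writing $H=\bO_\infty(G)$, one checks that $\bO_p(H)=1$ and $\bF(H)=\bF(G)$, and, crucially, that $\bF(G)$ is a $p'$-group, since $\bO_p(G)=1$ forces $\bF(G)=\prod_{q\ne p}\bO_q(G)$. A Clifford-theory estimate---each $\varphi\in\IBR_p(H)$ lies under some $\phi\in\IBR_p(G)$, and each $G$-orbit on $\IBR_p(H)$ has length at most $|G:H|$---gives $k_{p'}(H)\le |G:H|\cdot k_{p'}(G)$, so $k_{p'}(H)$ is $k$-bounded. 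Thus I have reduced to the following: $H$ is solvable, $\bO_p(H)=1$, $\bF(H)$ is a $p'$-group, and the goal is to bound $|H/\bF(H)|_{p'}$ in terms of $k_{p'}(H)$.

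Two easy observations frame the core. If $p\nmid|H|$ then every class is $p$-regular, so $k(H)=k_{p'}(H)$ and classical Landau bounds $|H|$ outright; hence I may assume $p\mid |H|$. Next, setting $F=\bF(H)$, solvability gives $\bC_H(F)=\bZ(F)\subseteq F$, so $H/\bZ(F)$ acts faithfully on $F$ and $|H/F|_{p'}$ is bounded once I bound the $p'$-part of the group $X:=H/\bC_H(F)$ of automorphisms induced on $F$. Here Clifford theory supplies the link to the hypothesis: decomposing $\IBR_p(H)$ over the $H$-orbits of $\Irr(F)$ (legitimate since $F$ is a $p'$-group, so $\IBR_p(F)=\Irr(F)$) yields $k_{p'}(H)\ge\#\{H\text{-orbits on }\Irr(F)\}=\#\{H\text{-orbits on }F\}$, the last equality by Brauer's permutation lemma.

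The heart of the matter is therefore a coprime counting lemma: an action of $X$ on the $p'$-group $F$ with few orbits, arising inside a $p$-solvable group with $\bO_p=1$, must have $p'$-part of bounded order. The delicate point is that $X$ and $F$ share their primes, so the action is not coprime and few orbits alone cannot bound $|X|_{p'}$; the Fermat group $C_q\rtimes C_{q-1}$ shows $|X|$ itself is genuinely unbounded. The mechanism that rescues the $p'$-part is that a large $p'$-section of $H/F$ forces many $p$-regular classes: every element of a Hall $p'$-subgroup $W\supseteq F$ of $H$ is $p$-regular, and the coprime action of a Sylow $p$-subgroup on $F$ and on the $p'$-chief factors of $H/F$ separates these elements into many distinct $H$-classes. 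To make this precise I would argue inductively along a chief series of $H$ through $F$, invoking at the non-coprime ($p'$-on-$p'$) layers the structural output of Theorem~\ref{main theorem}---that $\bO_\infty(G)/\bF(G)$ is metabelian by $k$-bounded---to reduce, modulo a $k$-bounded factor, to the case where the acting group is metabelian, keeping the chief-factor bookkeeping manageable. I expect the main obstacle to be exactly this step: quantifying, in the non-coprime layers, how a large $p'$-part of $X$ produces enough inequivalent $p$-regular classes to contradict $k_{p'}(H)\le k$, and doing so uniformly enough that the accumulated bound depends only on $k$. The Frobenius computation $k_{2'}(C_q\rtimes C_{q-1})=1+(q-1)_{2'}$ indicates that such a bound is close to tight and that only the $p'$-part can be controlled.
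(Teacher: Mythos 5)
Your reduction to the solvable case and to a faithful action of $H/\bF(H)$ on $\bF(H)$ with at most $k$ orbits matches the paper's opening moves (the paper additionally passes to $G/\Phi(G)$ and uses Gasch\"utz's theorem to get a completely reducible module, then splits it into at most $k$ irreducible summands). But the heart of the argument --- the step you yourself flag as ``the main obstacle'' --- is exactly what is missing, and it is not a routine quantification. Your proposed route (induction along a chief series, using the metabelian-by-$k$-bounded conclusion of Theorem~\ref{main theorem} at the non-coprime layers) does not by itself close the gap: the affine semilinear group $\Gamma(r^a)$ acting on $\GF(r^a)$ is metabelian, has few orbits, and has unbounded order, so ``metabelian by $k$-bounded'' plus ``few orbits'' cannot bound $|X|_{p'}$ without further input. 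What actually does the work in the paper is a combination of two ingredients you do not supply: (1) Seager's theorem (Lemma~\ref{kel}), which says that a solvable group with a faithful irreducible module and $k$ orbits is either of $k$-bounded order or embeds in $\Gamma(r^a)\wr\Sy_l$ with $l$ $k$-bounded, reducing everything to the semilinear case; and (2) the number-theoretic Lemma~\ref{lemma-numbertheory}, based on Feit's theorem on large Zsigmondy primes, asserting that $(r^a-1)_{p'}/(a\cdot a_p)\to\infty$ as $a\to\infty$.

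Concretely, in the semilinear case the paper shows that the projection of $G_i\cap\Gamma(r^a)^l$ to a factor $\Gamma_j=H_jB_j$ has $p'$-order at most a $k$-bounded multiple of $a$ (because the multiplicative part is abelian, so its number of $p$-regular classes equals its $p'$-order, and that number is controlled by $k$), while the few-orbits hypothesis forces the order of this projection to be within a $k$-bounded factor of $r^a-1$. Comparing the two forces $(r^a-1)_{p'}/(a\cdot a_p)$ to be $k$-bounded, and only the Zsigmondy-prime estimate then bounds $a$, hence the $p'$-part. Your heuristic about Hall $p'$-subgroups and coprime Sylow actions points in the direction of the first estimate but contains neither the orbit-versus-order comparison nor the arithmetic input; as written, the proposal is a correct framing of the problem rather than a proof.
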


The classification of groups $G$ with $\bO_p(G)=1$ according to
$k_{p'}(G)$ has been considered in several papers. First, with a
block-theoretic motivation, Y.~Ninomiya and T.~Wada classified
in~\cite{nw} the groups with $k_{p'}(G)=2$. Ninomiya then obtained
the classification when $k_{p'}(G)=3$ in~\cite{n3,n1,n2} and
G.~Tiedt classified the solvable groups with $k_{p'}(G)=4$ for $p$
odd in~\cite{tie}. We have classified the groups with trivial
solvable radical and $k_{p'}(G)=4$.

\begin{theorem}\label{B}
Let $p$ be a prime and $G$ a finite group with trivial solvable
radical. Then $k_{p'}(G)=4$ if and only if one of the following
holds:
\begin{enumerate}
\item[(i)] $p=2$ and $G\cong \Al_5$, $\PSL_2(7)$, $\Al_6\cdot 2_1$, $\Al_6\cdot
2_2$, $\PSU_3(3)\cdot 2$, $\PSL_3(4)\cdot 2_1$, $\PSL_3(4)\cdot
2^2$, or $\ta F_4(2)'\cdot 2$.
\item[(ii)] $p=3$ and $G\cong \Al_5$.
\item[(iii)] $p=7$ and $G\cong \PSL_2(7)$.
\end{enumerate}
\end{theorem}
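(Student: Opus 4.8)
The plan is to use the trivial-solvable-radical hypothesis to pass to almost simple groups and then to run a finite case analysis. Since $\bO_\infty(G)=1$ we have $\bF(G)=1$, so the generalized Fitting subgroup equals the socle $\Soc(G)=S_1\times\cdots\times S_t$, a direct product of nonabelian simple groups, with $\bC_G(\Soc(G))=1$, so that $G$ embeds into $\Aut(\Soc(G))$ and permutes the factors. Finiteness of the list is already guaranteed by Theorem~\ref{main theorem}: since $\bO_\infty(G)=1$ forces $\bO_p(G)=1$, that result bounds $|G|=|G/\bO_\infty(G)|$ in terms of $k_{p'}(G)=4$. The content of the theorem is the explicit determination, for which I first prove that $t=1$, i.e. $\Soc(G)$ is simple.

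For the reduction to a simple socle I use that every $\chi\in\IBR_p(G)$ lies over a single $G$-orbit in $\IBR_p(\Soc(G))$, and that every such orbit is covered, so the number of $G$-orbits on $\IBR_p(\Soc(G))$ is at most $|\IBR_p(G)|=k_{p'}(G)=4$. Since $\IBR_p(\Soc(G))$ consists of the outer tensor products of the $\IBR_p(S_i)$, the trivial character together with the orbits having one, respectively two, nontrivial tensor factors already gives at least four orbits as soon as $t\ge2$, using the (easily verified) fact that every nonabelian simple group has at least three $p$-regular classes; equality occurs only in a few borderline configurations that a short inspection disposes of. Once $\Soc(G)=S$ is simple and $S\le G\le\Aut(S)$, I bound $|S|$: the action of $G$ on the $p$-regular classes of $S$ factors through $G/S\hookrightarrow\Out(S)$, so each orbit has size at most $|\Out(S)|$ and there are at most four orbits, whence $k_{p'}(S)\le 4\,|\Out(S)|$. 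Combined with lower bounds for the number of $p$-regular classes of simple groups---which in defining characteristic grow like a power of the field size $q$ (the semisimple classes) and in the remaining characteristics grow with $|S|$---this leaves only finitely many candidates for $S$.

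The final step is verification. For each surviving simple group $S$, each prime $p$ (treated separately in the defining-characteristic, non-defining dividing, and coprime cases), and each intermediate group $S\le G\le\Aut(S)$, I compute $k_{p'}(G)$ from the known ordinary and $p$-modular character data and retain exactly those with $k_{p'}(G)=4$; this establishes both directions of the equivalence at once. For the sporadic, alternating, and small exceptional groups this is a finite lookup in the ATLAS and the available decomposition matrices. I expect the main obstacle to be the low-rank families that survive the bound, notably $\PSL_2(q)$, $\PSL_3(q)$ and $\PSU_3(q)$: there the orbit argument bounds $q$ but does not eliminate the family, so one must express $k_{p'}(G)$ as an explicit function of $q$ and of the outer automorphisms used to build $G$, and solve $k_{p'}(G)=4$. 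It is precisely this analysis that must single out $\PSL_2(7)$, the two extensions of $\Al_6$, $\PSU_3(3)\cdot 2$, the extensions $\PSL_3(4)\cdot 2_1$ and $\PSL_3(4)\cdot 2^2$, and $\ta F_4(2)'\cdot 2$, while ruling out the many nearby candidates.
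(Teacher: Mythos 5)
Your overall architecture (reduce to an almost simple group, bound the socle, then verify) matches the paper's, but two of your key steps do not actually go through as described. First, the reduction to a simple socle: counting $G$-orbits on $\IBR_p(\Soc(G))$ by the number of nontrivial tensor factors gives, when $t\ge 2$, only the trivial orbit, at least one orbit with exactly one nontrivial factor, at least one with exactly two, and so on --- that is, at least $t+1$ orbits, which is $3$ for $t=2$ and never exceeds $4$ until $t\ge 4$. Since $4$ orbits is perfectly consistent with $k_{p'}(G)=4$, you get no contradiction at all for $t=2,3$; the two nontrivial Brauer characters of a factor $S_i$ can in principle be fused by automorphisms realized in $G$, so you cannot split the ``one nontrivial factor'' type into two orbits without further input. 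The paper avoids this by working with elements rather than characters: since $|S|$ has at least three prime divisors one can pick $x_1,x_2\in S$ of two \emph{distinct} prime orders different from $p$, and because element orders are preserved under all automorphisms the six elements $1,(x_1,1,\dots),(x_1,x_1,1,\dots),(x_2,1,\dots),(x_2,x_2,1,\dots),(x_1,x_2,1,\dots)$ lie in six distinct $G$-classes, contradicting $k_{p'}(G)\le 5$. The same trick with $y_1,y_2\in\bC_G(S)$ shows $\bC_G(S)=1$. Your argument needs to be replaced by something of this kind.

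Second, your bound $k_{p'}(S)\le 4\,|\Out(S)|$ combined with growth estimates does prove finiteness of the candidate list (this is essentially Proposition~\ref{Out theorem}, already used for Theorem~\ref{main1}), but it does not produce a list one can actually check: the resulting order bound is astronomical, so the ``finite lookup'' you defer to is not feasible as stated, in particular for the families $\PSL_2(q)$, $\PSL_3(q)$, $\PSU_3(q)$. The paper's essential extra idea, absent from your proposal, is arithmetic: every prime $q\ne p$ dividing $|S|$ contributes at least one $p$-regular class of $G$, so $k_{p'}(G)=4$ forces $|S|$ to have at most four prime divisors. This reduces to Herzog's list of eight $K_3$-groups and, via Bugeaud--Cao--Mignotte, to $\PSL_2(q)$ with an explicit exponential Diophantine constraint plus a short list of small groups; only then do the centralizer and proportion bounds pin $q$ down to an explicit set of seventeen values that can be checked in the Atlas and GAP. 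Without the prime-divisor reduction your final verification step cannot be carried out.
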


\noindent We conjecture that these are all the nonsolvable groups
with four $p$-regular classes and $\bO_p(G)=1$ but, at this time, we
have not been able to prove this.

Motivated by the results on $p$-regular classes, we have considered
the analogous question for $p$-singular classes. So let $k_p(G)$
denote the number of conjugacy classes of $p$-singular elements in
$G$. We have obtained the following.

\begin{theorem}\label{main theorem for p-singular classes} Let $p$ be a fixed prime, $F$ a positive real number, and $G$ a finite group whose non-abelian
composition factors have order divisible by $p$ and $\bO_{p'}(G)=1$.
Assume that $G$ does not contain $\PSL_2(p^f)$ with $f> F$ as a
composition factor. Assume furthermore that
\begin{enumerate}
\item[(i)] if $p=2$ then the Suzuki groups $\Sz(2^f)$ with $f>F$ are
not composition factors of $G$; and
\item[(ii)] if $p=3$ then the Ree groups $\ta G_2(3^f)$ with $f>F$ are
not composition factors of $G$.
\end{enumerate}
Then $|G/\bO_\infty(G)|$ is $(F,k_p(G))$-bounded and
$\bO_\infty(G)/\bF(G)$ is metabelian by $(F,k_p(G))$-bounded.
\end{theorem}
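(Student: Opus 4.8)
The plan is to follow the strategy behind Theorem~\ref{main theorem} for the $p$-regular case, adapted to $p$-singular classes, treating the non-solvable quotient $G/\bO_\infty(G)$ and the solvable radical separately. Set $k:=k_p(G)$. The basic monotonicity I exploit is that $k_p(G)\geq k_p(G/N)$ for every $N\trianglelefteq G$: since the order of $xN$ divides that of $x$, images of $p$-regular classes are $p$-regular, so every $p$-singular class of $G/N$ is the image of a $p$-singular class of $G$. Writing $\bar G=G/\bO_\infty(G)$, the group $\bar G$ has trivial solvable radical, hence embeds into $\Aut(\Soc(\bar G))$ with $\Soc(\bar G)=S_1\times\cdots\times S_m$ a direct product of non-abelian simple groups permuted by $\bar G$. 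Thus bounding $|G/\bO_\infty(G)|$ in terms of $F$ and $k$ reduces to (a) bounding each $|S_j|$ and (b) bounding the number $m$ of factors, after which $|\bar G|\leq\prod_j|\Aut(S_j)|\cdot m!$ is $(F,k)$-bounded. A useful preliminary remark, valid because $\bO_{p'}(G)=1$, is that $\bF(G)=\bO_p(G)$ is a $p$-group, since each $\bO_q(G)$ with $q\neq p$ lies in $\bO_{p'}(G)=1$.

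For step (a) the key input is a statement about simple groups, which I expect to be the main obstacle: for a non-abelian finite simple group $S$ with $p\mid|S|$, the number $k_p(S)$ tends to infinity with $|S|$, the only exceptions being $S\cong\PSL_2(p^f)$, and, when $p=2$, $S\cong\Sz(2^f)$, and, when $p=3$, $S\cong\ta G_2(3^f)$, for which $k_p(S)$ stays bounded. These three families are exactly the simple groups in which every $p$-singular element is a $p$-element and the $p$-elements fall into boundedly many classes; in every other case (alternating groups, Lie type groups of rank at least two or in non-defining characteristic, and notably $\PSU_3(p^f)$, where $p$-singular elements with nontrivial semisimple part already produce growing class numbers) the count is unbounded. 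Establishing this dichotomy requires the classification of finite simple groups together with detailed information on unipotent and mixed classes in groups of Lie type, which is the technical heart of the proof. Granting it, each direct factor $S_j$ of $\Soc(\bar G)$ is a composition factor of $G$, so $p\mid|S_j|$ by hypothesis; choosing $p$-singular elements supported on a single factor shows $k_p(\bar G)\geq k_p(S_j)/|\Out(S_j)|$. Since $k_p(G)\geq k_p(\bar G)$ while $k_p(S_j)$ grows much faster than $|\Out(S_j)|$, either $S_j$ lies in one of the three families — so $f\leq F$ by hypothesis and $|S_j|$ is $F$-bounded — or $|S_j|$ is $k$-bounded; in all cases $|S_j|$ is $(F,k)$-bounded.

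For step (b), let $\bar G$ act on the set of factors with orbits $O_1,\dots,O_t$, $|O_i|=n_i$. Factors in a common orbit are isomorphic, say to $T_i$, and elements of the base group $T_i^{\,n_i}$ that carry a fixed nontrivial $p$-element on exactly $j$ coordinates give, for $j=1,\dots,n_i$, distinct $\Aut(T_i)\wr\Sy_{n_i}$-classes which remain distinct $p$-singular classes of $\bar G$; hence $n_i\leq k_p(\bar G)\leq k$. Likewise distinct orbits yield distinct $p$-singular classes, so $t\leq k$, and therefore $m=\sum_i n_i\leq k^2$ is bounded. Combined with (a), this shows $|G/\bO_\infty(G)|$ is $(F,k)$-bounded.

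It remains to analyse $R:=\bO_\infty(G)$ modulo $P:=\bF(G)=\bO_p(G)$. For solvable $R$ one has $C_R(\bF(R))\subseteq\bF(R)=P$, so $C_R(P)=\bZ(P)$; if $K$ is the full preimage in $R$ of the kernel of the action of $R/P$ on $V:=P/\Phi(P)$, then $[K,P]\subseteq\Phi(P)$, so by Burnside $K/\bZ(P)$ is a $p$-group, whence $K$ is a normal $p$-subgroup of $R$ and $K\subseteq\bO_p(R)=P$. Thus $R/P$ acts faithfully as a solvable subgroup of $\GL(V)$. Now the nontrivial elements of $P$ are $p$-singular, so the number of $G$-orbits on $V$ is at most $k+1$, and since each $G$-orbit splits into at most $|G:R|=|G/\bO_\infty(G)|$ orbits under $R$, the number of $R$-orbits on $V$ is $(F,k)$-bounded. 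Finally I would invoke the orbit-counting result for solvable linear groups underlying the analogous conclusion of Theorem~\ref{main theorem}: a solvable linear group with boundedly many orbits on its module has a metabelian normal subgroup of bounded index (Singer-type groups show one cannot expect more). This gives that $\bO_\infty(G)/\bF(G)=R/P$ is metabelian by $(F,k)$-bounded. The point requiring care here is complete reducibility of $V$, which one arranges by passing to suitable module sections; the substantive difficulty, however, remains the simple-group dichotomy of step (a).
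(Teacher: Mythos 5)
Your architecture is the paper's: for the first part you pass to $\Soc(G/\bO_\infty(G))$, bound the number of simple factors by exhibiting distinct $p$-singular classes, and bound each factor via a dichotomy on the number of $\Aut(S)$-classes of $p$-singular elements (this is Theorem~\ref{main21} together with Proposition~\ref{proposition 2-singular class}); for the second part you reduce to a solvable linear group with few orbits and invoke a Seager-type theorem. However, there are two genuine gaps. The first is that the simple-group dichotomy --- $k_p(S)/|\Out(S)|\to\infty$ with $|S|$ outside the families $\PSL_2(p^f)$, $\Sz(2^f)$ for $p=2$, and $\ta G_2(3^f)$ for $p=3$ --- is the technical core of the whole theorem, and you only assert it (``Granting it\ldots''). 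The paper proves it by combining the Isaacs--Kantor--Spaltenstein lower bounds on the proportion of $p$-singular elements (roughly $1/(4h(S))$ in cross characteristic and $2/(5q)$ in defining characteristic) with the Fulman--Guralnick lower bounds $\sim q^r$ on centralizer orders; the defining-characteristic case is exactly where the product $(2/5q)\cdot q^r/(6f)$ forces $r>1$ and isolates the three rank-one exceptions. Without this input the proof is incomplete. (Your side remark that in the exceptional families ``every $p$-singular element is a $p$-element'' is also false for $\ta G_2(3^f)$, which has two classes of elements of order $6$; what is true is merely that $k_p$ stays bounded there.)

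The second gap is in the solvable-radical step. You apply the orbit theorem to the action of $R/P$ on $V=P/\Phi(P)$. That module is faithful, but it is in general not completely reducible (e.g.\ $R/P$ may have order divisible by $p$ even though $\bO_p(R/P)=1$, as for $\Sy_3$ on $\FF_3^2$), whereas Lemma~\ref{kel} and Corollary~\ref{c} require a faithful \emph{irreducible} module. The extension to faithful completely reducible modules is routine, since the number of irreducible summands is bounded by the number of orbits and one embeds the group into the product of its actions on the summands; but ``passing to suitable module sections'' does not repair the general case, because faithfulness is lost on sections --- the kernel of the action on $\soc(V)$ or on the semisimplification of $V$ need not be trivial. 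The paper avoids this by a different reduction: using Lemma~\ref{normal} and the first part one may assume $G$ itself is solvable, then replace $G$ by $G/\Phi(G)$ (which leaves $\bO_\infty(G)/\bF(G)$ unchanged and does not increase $k_p$) and invoke Gasch\"utz's theorem, which guarantees that $G/\bF(G)$ acts faithfully \emph{and completely reducibly} on $\bF(G)$. Some such device is needed before Seager's theorem can be legitimately applied; as written, your last step does not go through.
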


As before, we say that $|G/\bO_\infty(G)|$ is
\emph{$(F,k_p(G))$-bounded} or \emph{bounded in terms of $F$ and
$k_p(G)$} to mean that $|G/\bO_\infty(G)|<f(F,k_p(G))$ for some
real-valued function $f$ on $\NN \times \NN$.

The next result is an analog of Theorem~\ref{main theorem-p'-part}.

\begin{theorem}\label{main theorem-p-part} Assume the hypotheses of
Theorem~\ref{main theorem for p-singular classes}. Then
$|G/\bF(G)|_p$ is bounded in terms of $F$ and $k_p(G)$.
\end{theorem}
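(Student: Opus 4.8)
The plan is to reduce, via Theorem~\ref{main theorem for p-singular classes}, to a purely local counting problem inside the solvable radical, and then to bound the $p$-part of a metabelian section by exploiting a faithful coprime action. First I would set $R=\bO_\infty(G)$ and record two immediate consequences of the hypothesis $\bO_{p'}(G)=1$: the Fitting subgroup $\bF(G)=\bO_p(G)$ is a $p$-group, and $\bO_{p'}(R)=1$ as well (since $\bO_{p'}(R)$ is characteristic in $R\trianglelefteq G$). By Theorem~\ref{main theorem for p-singular classes}, $|G/R|$ is $(F,k_p(G))$-bounded and there is $\bF(G)\le N\trianglelefteq R$ with $M:=N/\bF(G)$ metabelian and $|R/N|$ bounded. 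Since $|G/\bF(G)|_p=|G/R|_p\,|R/N|_p\,|N/\bF(G)|_p$ and the first two factors are bounded, everything reduces to bounding $|M|_p=|N/\bF(G)|_p$.

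Next I would show that $k_p(N)$, and hence $k_p(M)$, is bounded in terms of $F$ and $k_p(G)$. The engine is the elementary observation that for any $T\trianglelefteq H$ the $H$-classes of $p$-singular elements contained in $T$ are exactly the $H$-orbits on the set of $p$-singular $T$-classes; since $T$ acts trivially on its own classes, the number of such orbits is at least $k_p(T)/|H:T|$, so $k_p(H)\ge k_p(T)/|H:T|$. Applying this with $(H,T)=(G,R)$ and then $(R,N)$, and using that $|G:R|$ and $|R:N|$ are bounded, shows $k_p(N)$ is bounded. Passing to the quotient by the $p$-group $\bF(G)$ cannot increase the number of $p$-singular classes (every $p$-singular element of $M$ lifts to a $p$-singular element of $N$), so $k_p(M)\le k_p(N)$ is bounded as well.

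The heart of the argument is then to bound $|M|_p$ for the metabelian group $M$ with $\bO_p(M)=1$. Since $M$ is metabelian, $M'$ is abelian, its Sylow $p$-subgroup is normal in $M$ and hence trivial, so $M'$ is a $p'$-group and a Sylow $p$-subgroup $P$ of $M$ is abelian. As $M$ is solvable, $\bC_M(\bF(M))\le\bF(M)=\bO_{p'}(M)$, whence $\bC_P(\bF(M))\le P\cap\bO_{p'}(M)=1$; thus $P$ acts faithfully and coprimely on $\bF(M)$, and (a coprime action trivial on the Frattini quotient being trivial) faithfully on the abelian group $\bF(M)/\Phi(\bF(M))$. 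It now suffices to prove the core statement: \emph{if an abelian $p$-group $P$ acts faithfully and coprimely on an abelian $p'$-group, then $|P|$ is bounded in terms of the number of $p$-singular classes of the resulting group}. I would prove this from the Brauer $p$-section decomposition $k_p(M)=\sum_s k_{p'}(\bC_M(s))$, the sum running over $M$-classes of nontrivial $p$-elements $s$, by showing that a large faithful $P$ forces this sum to be large, and then apply it with $|P|=|M|_p$.

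The main obstacle is precisely this last step, and it is where the hypothesis $\bO_{p'}(G)=1$ does its work. Merely counting $P$-orbits on $\bF(M)$ does not suffice: a Frobenius configuration $C_q\rtimes C_{p^a}$ with $q=p^a+1$ (a Fermat-type prime when $p=2$) has only two orbits on its $p'$-part while $|P|=p^a$ is unbounded, and it is exactly such configurations that make bounding the full index $|G/\bF(G)|$ hopeless. What rescues the $p$-part is that in this regime the nontrivial $p$-elements sit in unfused abelian complements with small centralizers, and so contribute on the order of $|P|$ distinct $p$-singular classes. The delicate point is to control, uniformly over an arbitrary faithful coprime abelian action, the trade-off between fusion of the $p$-elements $s$ (which shrinks the number of terms in the $p$-section sum) and the sizes $k_{p'}(\bC_M(s))$ (which then grow), and to show that the resulting count always tends to infinity with $|P|$. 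Establishing this dichotomy rigorously is the technical core of the proof.
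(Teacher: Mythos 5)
Your global reduction is sound and genuinely different from the paper's. The paper does not pass through the metabelian conclusion of Theorem~\ref{main theorem for p-singular classes} at all: it reduces to solvable $G$ via Theorem~\ref{main21} and Lemma~\ref{normal}, then reruns the Gasch\"utz--Seager machinery from the proof of Theorem~\ref{main theorem-p'-part} (embedding the relevant quotients into $\Gamma(r^a)\wr\Sy_l$, using that $k_p$ of an abelian group equals $n_{p'}(n_p-1)$, and concluding from ``$(r^a-1)/a^2$ is $k$-bounded'' that $a$ is bounded). Your route instead quotes Theorem~\ref{main theorem for p-singular classes} to bound $|G/R|$ and $|R/N|$, correctly propagates the class-count bounds down to $M=N/\bF(G)$ via Lemmas~\ref{lemma k2(G/N) <k2(G)} and~\ref{normal}, and correctly checks $\bO_p(M)=1$; if the final step worked, this would be a cleaner proof that avoids Seager's theorem entirely for this result.

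The genuine gap is that final step: you state the core claim (a bounded number of $p$-singular classes bounds the $p$-part of the metabelian section) but do not prove it, explicitly deferring the ``fusion versus centralizer size'' dichotomy as an unresolved technical core. As written, the proposal therefore does not establish the theorem. The irony is that no dichotomy is needed and the gap closes in two lines. Since $\bO_p(M)=1$ and $M'$ is abelian and normal, the Sylow $p$-subgroup of $M'$ is normal in $M$ and hence trivial, so $M'$ is a $p'$-group and $P\in\Syl_p(M)$ is abelian with $P\cap M'=1$. If $s,t\in P$ and $t=s^m$, then $s^{-1}t=[s,m]\in P\cap M'=1$, so distinct elements of $P$ are never fused in $M$; as every nontrivial $p$-element of $M$ is conjugate into $P$, the group $M$ has exactly $|P|-1$ classes of nontrivial $p$-elements, each of them $p$-singular, whence $k_p(M)\geq |M|_p-1$. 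This makes the Brauer $p$-section decomposition, the coprime action on $\bF(M)/\Phi(\bF(M))$, and the auxiliary semidirect product $A\rtimes P$ all unnecessary --- which is fortunate, because you never justified how $k_p$ of that abstract semidirect product relates back to $k_p(M)$, and that would have been a second gap had the argument actually depended on it.
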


All the assumptions on the simple groups $\PSL_2(p^f)$,
$\Sz(2^{f})$, $\ta G_2(3^f)$ are necessary. The simple linear group
$\PSL_2(2^f)$ with $f\geq 2$ has a unique class of $2$-singular
elements and indeed this is the single class of involutions. Also,
the Suzuki group $\Sz(2^{f})$ with $f\geq 3$ odd has exactly three
$2$-singular classes, two of which are (unipotent) classes of
elements of order 4 and the other is the unique class of
involutions, see~\cite{Suzuki}.

On the other hand, when $p$ is an odd prime, the simple linear group
$\PSL_2(p^f)$ with $f\geq 1$ has exactly $2(p^{2f}-1)$ $p$-singular
elements and two classes of $p$-singular elements. The Ree group
$\ta G_2(3^f)$ with $f\geq 3$ odd has exactly eight classes of
$3$-singular elements (three classes of order $3$ elements, three of
order 9, and two of order 6), see~\cite{Ree}. Finally, we remark
that the Suzuki groups $\Sz(2^{f})$ with $f\geq 3$ odd are the only
simple groups whose orders are not divisible by $3$.

One could ask whether we can weaken the hypothesis by considering
only classes of $p$-elements. Indeed, our proof shows that if $G$ is
solvable  and $\bO_{p'}(G)=1$ then $G$ has a normal subgroup $N$
such that $N/\bF(G)$ is metabelian and $|G/N|$ is bounded in terms
of the number of classes of $p$-elements. Also, in~\cite{mst} it was
proved that the derived length of a Sylow $p$-subgroup of a finite
group is bounded above by (a linear function of) the number of
classes of $p$-elements. However, our results fail if we replace
``$p$-singular classes'' by ``classes of $p$-elements''. For
instance, the linear group $\PSL_3(3^a)$ has three classes of
$3$-elements for every $a\in\ZZ^+$ (see~\cite{Simpson-Frame}) and
one could find counterexamples with any family of simple groups of
Lie type.

Since the order of every non-abelian simple group is even, the
following is an immediate consequence of Theorems~\ref{main theorem
for p-singular classes} and~\ref{main theorem-p-part}.

\begin{corollary}\label{main theorem for 2-singular classes} Let $F$ be a positive real number, $G$ a finite group with $\bO_{2'}(G)=1$ and assume that $G$ does
not have composition factors isomorphic to $\PSL_2(2^f)$ or
$\Sz(2^{f})$ with $f> F$. Then
\begin{enumerate}
\item[(i)] $|G/\bO_\infty(G)|$ and $|G/\bF(G)|_2$ are both
$(F,k_2(G))$-bounded; and
\item[(ii)] $|\bO_\infty(G)/\bF(G)|$ is metabelian by
$(F,k_2(G))$-bounded.
\end{enumerate}
\end{corollary}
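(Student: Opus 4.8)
The plan is to deduce the statement directly from Theorems~\ref{main theorem for p-singular classes} and~\ref{main theorem-p-part} by specializing to the prime $p=2$. The only point requiring verification is that the hypotheses of the corollary, as stated, are precisely the $p=2$ instance of the hypotheses of those two theorems.

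First I would match the arithmetic conditions. The assumption $\bO_{2'}(G)=1$ is exactly $\bO_{p'}(G)=1$ with $p=2$, and the assumption that $G$ has no composition factor isomorphic to $\PSL_2(2^f)$ or $\Sz(2^f)$ with $f>F$ coincides with the $\PSL_2(p^f)$ condition together with condition~(i) of Theorem~\ref{main theorem for p-singular classes} when $p=2$. Condition~(ii) of that theorem concerns only the Ree groups $\ta G_2(3^f)$ and is therefore vacuous here, since $p=2\neq 3$.

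The one hypothesis of Theorem~\ref{main theorem for p-singular classes} not explicitly imposed in the corollary is that every non-abelian composition factor of $G$ has order divisible by $p$. For $p=2$ this requirement is automatic: by the Feit--Thompson odd order theorem, every finite group of odd order is solvable, so every non-abelian finite simple group---and hence every non-abelian composition factor of any finite group---has even order. Thus the full hypothesis set of Theorems~\ref{main theorem for p-singular classes} and~\ref{main theorem-p-part} holds with $p=2$ under the assumptions of the corollary.

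With the hypotheses verified, I would simply invoke the two theorems. Theorem~\ref{main theorem for p-singular classes} yields that $|G/\bO_\infty(G)|$ is $(F,k_2(G))$-bounded and that $\bO_\infty(G)/\bF(G)$ is metabelian by $(F,k_2(G))$-bounded, giving the first assertion of~(i) and all of~(ii). Theorem~\ref{main theorem-p-part} supplies the remaining assertion of~(i), namely that $|G/\bF(G)|_2$ is $(F,k_2(G))$-bounded. There is no genuine obstacle in this deduction---all the substantive work resides in the two theorems being applied; the only subtlety is the (standard) appeal to Feit--Thompson to discharge the even-order hypothesis for free when $p=2$.
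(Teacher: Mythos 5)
Your proposal is correct and matches the paper exactly: the corollary is stated in the paper as an immediate consequence of Theorems~\ref{main theorem for p-singular classes} and~\ref{main theorem-p-part}, justified by the observation that every non-abelian simple group has even order, which is precisely your Feit--Thompson step. Nothing further is needed.
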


The proofs of Theorems~\ref{main theorem} and~\ref{main theorem for
p-singular classes} are divided into two parts. First we prove that
$|G/\bO_{\infty}(G)|$ is $k_{p'}(G)$-bounded (respectively,
$(F,k_p(G))$-bounded) and then we prove our results in full. These
theorems are of a qualitative nature only. One could obtain explicit
bounds by following the proofs, but these bounds would be far from
best possible. For instance, following the proof of
Theorem~\ref{main theorem} we obtain that if $\bO_{\infty}(G)=1$ and
$k_{p'}(G)=4$ then $|G|\leq (11!^4/16)!$ while from Theorem~\ref{B}
we see that indeed $|G|\leq 35\,942\,400$.

The outline of the paper is as follows. In the next section, we
prove some preliminary lemmas which will be needed later in the
proofs of the main results. In
Section~\ref{section-regular-class-Lietype-groups}, we present some
lower bounds for the number of $p$-regular classes of finite simple
groups of Lie type. These bounds are crucial in the proof of the
first part of Theorem~\ref{main theorem} in Section~\ref{section4}.
Similarly, we state and prove lower bounds for the number of
$p$-singular classes of simple groups that we need for
Theorem~\ref{main theorem for p-singular classes} in
Section~\ref{section-singular-class-simple-groups} and complete the
proof of the first part of Theorem~\ref{main theorem for p-singular
classes} in Section~\ref{section6}. Next, we prove the full versions
of Theorems~\ref{main theorem} and~\ref{main theorem for p-singular
classes} in Section~\ref{section-proofs of maintheorems}. Proofs of
Theorems~\ref{main theorem-p'-part} and~\ref{main theorem-p-part}
have the same flavor and are carried out in
Section~\ref{section-proof-Theorem-p'part}. We discuss the
difficulties of improving on these theorems and some open questions
in Section~\ref{section-example}. Finally we prove Theorem~\ref{B}
in Section~\ref{section-proof-Theorem-B}.

\begin{remark} After this paper was completed, we learned that part
of Theorem~\ref{main theorem} was already obtained by D.\,S.~Passman
in~\cite{Passman}. He raised the following problem: \emph{Let $G$
have precisely $n$ irreducible representations over an algebraically
closed field of characteristic $p>0$. How much of the structure of
$G$ can be bounded by a function of $n$, possibly depending upon
$p$}? Using a result of R.\,M.~Guralnick
\cite[Theorem~3.4]{Passman}, Passman proved that
$|G/\bO^{p'}(\bO_\infty(G))|$ is bounded in terms of $n$, where
$\bO^{p'}(\bO_\infty(G))$ is the smallest normal subgroup of
$\bO_\infty(G)$ such that $\bO_\infty(G)/\bO^{p'}(\bO_\infty(G))$ is
a $p'$-group (see \cite[Corollary~3.5]{Passman}). The first part of
Theorem~\ref{main theorem} follows from Passman's result while the
second part and Theorem~\ref{main theorem-p'-part} partly solve
Passman's problem. As the arguments are different and the proof is
just sketched in Passman's paper, for completeness we have decided
to keep our proof of the first part of Theorem~\ref{main theorem}.
\end{remark}


\section{Preliminaries}\label{section-preliminaries}
In this section we prove some easy but useful lemmas. The first two
of them are well-known, but for the reader's convenience we include
their short proofs. We will write $\ord(g)$ to denote the order of a
group element $g$.

\begin{lemma}\label{lemma k2(G/N) <k2(G)} Let $G$ be a finite group and $N\trianglelefteq G$. Then $k_{p'}(G/N)\leq k_{p'}(G)$ and $k_p(G/N)\leq
k_p(G)$.
\end{lemma}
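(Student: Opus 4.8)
The plan is to use the canonical projection $\pi\colon G\to G/N$, which induces a surjection from the set of conjugacy classes of $G$ onto the set of conjugacy classes of $G/N$, since $\pi(gxg^{-1})=\pi(g)\pi(x)\pi(g)^{-1}$ and $\pi$ is onto. Both inequalities will follow once I show that this surjection restricts to a surjection from the $p$-regular (respectively $p$-singular) classes of $G$ onto those of $G/N$: surjectivity of a map between finite sets forces the domain to be at least as large as the codomain.

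For the $p$-singular inequality this is immediate. If $\bar{g}\in G/N$ is $p$-singular and $g\in G$ is any preimage, then $\ord(\bar{g})$ divides $\ord(g)$, so $p\mid\ord(\bar{g})$ forces $p\mid\ord(g)$; thus $g$ is $p$-singular and its class maps to the class of $\bar{g}$. Hence every $p$-singular class of $G/N$ lies in the image of the $p$-singular classes of $G$, giving $k_p(G/N)\le k_p(G)$.

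The $p$-regular case needs one extra idea, and this is the only real point. The image of a $p$-regular element is again $p$-regular because $\ord(\pi(x))$ divides $\ord(x)$, so $\pi$ at least carries $p$-regular classes into $p$-regular classes. For surjectivity I must lift an arbitrary $p$-regular element $\bar{g}$ of $G/N$ to a $p$-regular element of $G$, and a general preimage need not be $p$-regular. To fix this I would pick any preimage $g$ and use its commuting decomposition $g=g_{p}\,g_{p'}$ into $p$-part and $p'$-part. Applying $\pi$ gives $\bar{g}=\pi(g_{p})\pi(g_{p'})$, and since $\pi$ preserves the property of being a $p$-element (respectively a $p'$-element) and these images commute, $\pi(g_{p})$ and $\pi(g_{p'})$ are precisely the $p$-part and the $p'$-part of $\bar{g}$. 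As $\bar{g}$ is $p$-regular, its $p$-part is trivial, so $\pi(g_{p})=1$ and therefore $\pi(g_{p'})=\bar{g}$, where $g_{p'}$ is $p$-regular. This produces a $p$-regular preimage of $\bar{g}$, establishing surjectivity onto the $p$-regular classes of $G/N$ and hence $k_{p'}(G/N)\le k_{p'}(G)$.

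I expect the lifting step in the $p$-regular case to be the only place requiring care; everything else is formal, and no earlier results from the excerpt are needed.
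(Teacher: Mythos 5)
Your proof is correct. The $p$-singular half is essentially identical to the paper's: both observe that $\ord(gN)$ divides $\ord(g)$, so any preimage of a $p$-singular element is $p$-singular, and representatives of distinct classes of $G/N$ remain in distinct classes of $G$. For the $p$-regular inequality, however, you take a genuinely different and more elementary route. The paper simply cites the fact that $k_{p'}(G)$ equals the number of irreducible $p$-Brauer characters and that Brauer characters of $G/N$ inflate to Brauer characters of $G$ --- a one-line argument, but one that leans on modular representation theory. You instead work entirely inside the groups: you lift a $p$-regular element $\bar g$ of $G/N$ to a $p$-regular element of $G$ by taking any preimage $g$, forming its commuting decomposition $g=g_p g_{p'}$, and using the fact that $\pi(g_p)$ and $\pi(g_{p'})$ are (by uniqueness of that decomposition) the $p$-part and $p'$-part of $\bar g$, so that $p$-regularity of $\bar g$ forces $\pi(g_{p'})=\bar g$. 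This lifting step is exactly the right idea and is carried out correctly; it buys you a self-contained, purely group-theoretic proof at the cost of a few more lines, whereas the paper's version is shorter but presupposes the dictionary between $p$-regular classes and Brauer characters.
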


\begin{proof} The first part of the lemma is obvious as $k_{p'}(G)$ is exactly the number
of irreducible $p$-Brauer characters of $G$ and every $p$-Brauer
character of $G/N$ can be viewed as a $p$-Brauer character of $G$.
So it remains to prove the second part of the lemma. Let
$k=k_p(G/N)$ and let $\{g_1N,g_2N,...,g_kN\}$ be a collection of
representatives for $p$-singular classes of $G/N$. Since $\ord(gN)$
divides $\ord(g)$ for every $g\in G$, we see that $g_1,g_2,...,g_k$
are $p$-singular. It is obvious that $g_1,g_2,...,g_k$ are in
different classes of $G$ and hence the lemma follows.
\end{proof}

\begin{lemma}\label{normal}
Let $G$ be a finite group and $N\trianglelefteq G$. Then
$k_{p'}(N)\leq |G:N|k_{p'}(G)$ and $k_{p}(N)\leq |G:N|k_{p}(G)$.
\end{lemma}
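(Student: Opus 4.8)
The plan is to exploit the conjugation action of $G$ on the set of conjugacy classes of $N$, together with the observation that this action respects element orders. First I would note that for $g\in G$ and an $N$-class $C\subseteq N$, the set $gCg^{-1}$ is again an $N$-class (as $N\trianglelefteq G$), and every element of $gCg^{-1}$ has the same order as the elements of $C$. Hence $G$ permutes the $p$-regular classes of $N$ among themselves, and likewise permutes the $p$-singular classes of $N$ among themselves. Since $N$ acts trivially on its own conjugacy classes, this permutation action factors through $G/N$, so every $G$-orbit of $N$-classes has size dividing $|G:N|$; in particular each orbit has size at most $|G:N|$.

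The key step is to bound the \emph{number} of these orbits. I would argue that two $N$-classes lying in the same $G$-orbit consist of $G$-conjugate elements and therefore fuse into a single conjugacy class of $G$. Conversely, $N$-classes lying in different $G$-orbits cannot be $G$-conjugate and hence lie in distinct $G$-classes. This sets up an injection from the set of $G$-orbits on $p$-regular classes of $N$ into the set of $p$-regular classes of $G$ (namely, those meeting $N$). Consequently the number of such orbits is at most $k_{p'}(G)$, and the same reasoning gives that the number of $G$-orbits on $p$-singular classes of $N$ is at most $k_p(G)$.

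Combining the two observations finishes the proof: partitioning the $p$-regular classes of $N$ into $G$-orbits, each of size at most $|G:N|$, and with the number of orbits at most $k_{p'}(G)$, yields
\[
k_{p'}(N)=\sum_{\text{orbits } \OC}|\OC|\leq |G:N|\cdot k_{p'}(G),
\]
and the identical count for $p$-singular classes gives $k_p(N)\leq |G:N|\,k_p(G)$.

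There is no serious obstacle here; the argument is a short orbit-counting (fusion) estimate. The only point requiring a moment of care is the injectivity claim in the second paragraph, i.e.\ that distinct $G$-orbits of $N$-classes land in distinct $G$-classes, which is what converts the bound on orbits into a bound involving $k_{p'}(G)$ and $k_p(G)$ rather than merely $|G|$.
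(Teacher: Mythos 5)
Your argument is correct and is essentially the paper's own proof: both consider the conjugation action of $G$ on the ($p$-regular, resp.\ $p$-singular) classes of $N$, bound each orbit's size by $|G:N|$, and bound the number of orbits by the number of $G$-classes meeting $N$. The extra details you supply (the action factoring through $G/N$, and the injectivity of the orbit-to-$G$-class map) are accurate and only make explicit what the paper leaves implicit.
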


\begin{proof} Consider the conjugation action of $G$ on the set of
$p$-singular classes of $N$. Clearly every orbit of this action has
size at most $|G:N|$. Therefore the number of $p$-singular classes
of $G$ inside $N$ is at least $k_p(N)/|G:N|$ and it follows that
$k_{p}(N)\leq |G:N|k_{p}(G)$. The other inequality is proved
similarly.
\end{proof}

The following lemma is a consequence of a result of S.\,M.
Seager~\cite{sea}.

\begin{lemma}
\label{kel} Let $G$ be a solvable group and $V$ a  faithful
irreducible $G$-module with $|V|=r^a$, where $r$ is a prime power. Assume that the number of $G$-orbits in $V$
is $k$. Then one of the following holds:
\begin{enumerate}
\item
$|G|$ is $k$-bounded.
\item
$G$ is isomorphic to a subgroup of the wreath product of the affine semilinear group $\Gamma(r^a)$
and a symmetric group $\Sy_l$ for some $l$ that is
$k$-bounded and $V=GF(r^a)^l$.
\end{enumerate}
\end{lemma}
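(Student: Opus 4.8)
The plan is to recognize $k$ as the \emph{rank} of a primitive solvable permutation group and then apply Seager's theorem~\cite{sea}. Concretely, I would form the affine group $A := V \rtimes G$, acting on the set $V$ in the natural way, with the normal subgroup $V$ acting by translations and $G$ acting linearly. Since translations are already transitive on $V$ and both $V$ and $G$ are solvable, $A$ is a transitive solvable permutation group of degree $|V|=r^a$. Because $G$ acts faithfully and irreducibly on $V$, the standard equivalence for affine-type groups (a block system for $A$ corresponds to a $G$-invariant subgroup of $V$) shows that $A$ has only the trivial block systems, so $A$ is in fact primitive.

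The key observation is that the stabilizer $A_0$ of the zero vector $0\in V$ is precisely $G$: translations move $0$, while $G$ fixes it. Hence the orbits of $A_0=G$ on $V$ are exactly the $G$-orbits on $V$, and the rank of $A$ — the number of orbits of a point stabilizer — equals $k$. Thus $A$ is a primitive solvable permutation group of rank $k$, and Seager's theorem applies. It yields a dichotomy: either the degree $|V|$, and hence $|A|=|V|\cdot|G|$, is bounded in terms of $k$, or $A$ is permutation isomorphic to a subgroup of $\Gamma(r^a)\wr \Sy_l$ in its natural (product) action, with $V$ identified with $GF(r^a)^l$ and $l$ bounded in terms of $k$. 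In the first case $|G|\le|A|$ is $k$-bounded, giving conclusion~(1). In the second, passing to the stabilizer of $0$ turns the permutation embedding into the linear embedding $G\hookrightarrow \Gamma(r^a)\wr\Sy_l$ (using faithfulness to view $G\le\GL(V)$), giving conclusion~(2).

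The routine points are the equivalence ``primitive affine $\Leftrightarrow$ irreducible'' and the identification rank $=k$; the substantive content is furnished entirely by Seager's classification, which we are entitled to invoke. The only real obstacle is therefore bookkeeping: confirming that the exceptional structure produced by Seager is exactly the wreath-product form claimed, and that the same prime power $r^a$ and the same $k$-bounded exponent $l$ occur. As a sanity check on the latter, note that $\Gamma(r^a)$ has two orbits on $GF(r^a)$ (namely $\{0\}$ and $GF(r^a)^{\times}$), so the base group $\Gamma(r^a)^l$ has $2^l$ orbits on $GF(r^a)^l$, and $\Sy_l$ fuses these into the $l+1$ orbits indexed by the number of nonzero coordinates. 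Thus the wreath-product groups have rank exactly $l+1$, forcing $l=k-1$ in the exceptional case; this both explains why these groups can have unbounded order yet bounded rank and shows that $l$ is automatically $k$-bounded, consistent with the statement.
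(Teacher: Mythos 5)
Your argument is correct and rests on exactly the same foundation as the paper's proof: the paper simply cites Theorem~2.1 of Keller's paper \cite{kel}, which is itself a reformulation of Seager's Theorem~1 \cite{sea}, so your reconstruction via the affine group $V\rtimes G$ (primitive because $V$ is irreducible, of rank $k$ because the point stabilizer of $0$ is $G$) is precisely the translation that Keller's reformulation packages. Your closing observation that the wreath products $\Gamma(r^a)\wr\Sy_l$ have exactly $l+1$ orbits on $GF(r^a)^l$, forcing $l\leq k-1$, is a nice explicit confirmation of why $l$ is $k$-bounded.
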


\begin{proof}
This follows immediately from Theorem~2.1 of~\cite{kel}, which is a
reformulation of Theorem~1 of~\cite{sea}.
\end{proof}

\begin{corollary}\label{c}
Assume the hypotheses of Lemma~\ref{kel}. Then $G$ is metabelian by
$k$-bounded.
\end{corollary}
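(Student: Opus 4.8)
The plan is to apply Lemma~\ref{kel} directly and dispose of its two conclusions separately. Recall that ``$G$ is metabelian by $k$-bounded'' means there is a normal subgroup $N\trianglelefteq G$ such that $N$ is metabelian and $|G/N|$ is $k$-bounded. In the first case of Lemma~\ref{kel}, $|G|$ itself is $k$-bounded, and one may simply take $N=1$: the trivial group is metabelian and $|G/N|=|G|$ is $k$-bounded. So the whole content of the corollary lies in the second case.

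Assume then that $G$ is (isomorphic to) a subgroup of $\Gamma(r^a)\wr\Sy_l$ with $l$ a $k$-bounded integer and $V=GF(r^a)^l$. First I would record the structure of the one-dimensional semilinear group $\Gamma(r^a)=\Gamma L_1(r^a)$ acting linearly on $V$: it has the cyclic normal subgroup $GF(r^a)^\times$ of scalar multiplications, with cyclic (Galois) quotient, so it is metacyclic and in particular metabelian, i.e. the derived subgroup $\Gamma(r^a)'$ is abelian.

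Next I would pass to the base group $B=\Gamma(r^a)^l$ of the wreath product. Since a finite direct product of metabelian groups is metabelian (its derived length is the maximum of the derived lengths of the factors), $B$ is metabelian. Now set $N=G\cap B$. As $B$ is normal in $\Gamma(r^a)\wr\Sy_l$, the subgroup $N$ is normal in $G$; as a subgroup of the metabelian group $B$, it is itself metabelian; and by the second isomorphism theorem $G/N=G/(G\cap B)\cong GB/B$ embeds into $(\Gamma(r^a)\wr\Sy_l)/B\cong\Sy_l$, so $|G/N|\leq l!$. Since $l$ is $k$-bounded, so is $l!$, and $N$ witnesses that $G$ is metabelian by $k$-bounded.

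There is no genuine obstacle here once Lemma~\ref{kel} is in hand; the one point to get right is that $\Gamma(r^a)$ denotes the one-dimensional semilinear group $\Gamma L_1(r^a)$ acting linearly on the $G$-module $V$ (so that $0\in V$ is fixed), which is metacyclic and hence metabelian. Were $\Gamma(r^a)$ to include translations its derived length would be $3$ and the conclusion would weaken accordingly; thus the linearity of the module action, forcing metacyclicity of the one-dimensional factor, is exactly what makes the metabelian conclusion available.
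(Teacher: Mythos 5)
Your proof is correct and is exactly the elaboration of the paper's one-line proof (``this follows instantly from Lemma~\ref{kel}''): case (1) is trivial, and in case (2) one takes $N=G\cap\Gamma(r^a)^l$, which is metabelian because $\Gamma(r^a)=\Gamma L_1(r^a)$ is metacyclic, and has $k$-bounded index since $G/N$ embeds in $\Sy_l$. You also correctly flag the one point worth being careful about, namely that $\Gamma(r^a)$ here is the stabilizer of $0$ (no translations), which is what makes the metabelian rather than merely derived-length-three conclusion available.
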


\begin{proof} This follows instantly from Lemma~\ref{kel}.
\end{proof}

We end this section with a number-theoretic lemma that will be
needed in the proof of Theorem~\ref{main theorem-p'-part}.

\begin{lemma}\label{lemma-numbertheory} Let $p$ be a fixed prime and $r>1$ be an integer. (Note that $r$ does not need to be
fixed.) Let $n_p$ and $n_{p'}$ respectively denote the $p$-part and
$p'$-part of a positive integer $n$. Then
\[\frac{(r^a-1)_{p'}}{a\cdot a_p}\rightarrow \infty \text{ as }
a\rightarrow \infty.\]
\end{lemma}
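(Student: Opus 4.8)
The plan is to produce an explicit lower bound for the ratio that grows exponentially in $a$ while depending on $r$ only through a harmless factor of the form $r^{a}$, so that the conclusion holds uniformly in $r\ge 2$; this uniformity is exactly what the parenthetical remark that $r$ need not be fixed is asking for. All of the difficulty is concentrated in controlling the $p$-part $(r^a-1)_p$ from above: once one knows that $(r^a-1)_p$ grows at most linearly in $a_p$ (up to a constant depending only on $p$), the $p'$-part $(r^a-1)_{p'}=(r^a-1)/(r^a-1)_p$ is forced to be large, and the rest is elementary estimation.

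The key estimate I would establish first is that there is a constant $C=C(p)$, depending only on $p$, with
\[
(r^a-1)_p \le r^{C}\cdot a_p \qquad\text{for all } r\ge 2,\ a\ge 1,
\]
taking $C=p-1$ when $p$ is odd and $C=2$ when $p=2$. This is where the lifting-the-exponent lemma enters, and it is the main (indeed essentially the only) obstacle. I would split into cases. If $p\mid r$, then $r^a-1$ is coprime to $p$, so $(r^a-1)_p=1$ and the bound is trivial. If $p$ is odd and $p\nmid r$, let $d$ be the multiplicative order of $r$ modulo $p$, so $d\mid p-1$; the lifting-the-exponent lemma gives $(r^a-1)_p=(r^d-1)_p\cdot a_p$ when $d\mid a$ and $(r^a-1)_p=1$ otherwise, and since $r^d-1$ divides $r^{p-1}-1$ we get $(r^a-1)_p\le (r^{p-1}-1)\,a_p<r^{p-1}a_p$ in either case. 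If $p=2$ and $r$ is odd, the even/odd form of lifting-the-exponent gives $(r^a-1)_2=(r-1)_2$ for $a$ odd and $(r^a-1)_2=\tfrac12(r^2-1)_2\,a_2$ for $a$ even (a genuine integer since $v_2(r^2-1)\ge 3$); in both cases $(r^a-1)_2\le (r^2-1)\,a_2<r^2a_2$. This establishes the estimate with the stated $C$, and the only real subtlety is remembering that the $p=2$ form of lifting-the-exponent differs from the odd-$p$ form.

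With the key estimate in hand, the remaining step is a routine chain of inequalities. Using $r^a-1\ge r^a/2$ (valid because $r^a\ge 2$) together with $a_p\le a$, I would write
\[
\begin{aligned}
\frac{(r^a-1)_{p'}}{a\cdot a_p}
&=\frac{r^a-1}{(r^a-1)_p\, a\, a_p}
\ge \frac{r^a/2}{r^{C}a_p\cdot a\, a_p}\\
&=\frac{r^{a-C}}{2\,a\,a_p^{2}}
\ge \frac{r^{a-C}}{2\,a^{3}}
\ge \frac{2^{\,a-C}}{2\,a^{3}}.
\end{aligned}
\]
The final quantity no longer depends on $r$ (beyond $r\ge 2$) and tends to $\infty$ as $a\to\infty$, which proves the lemma and, in fact, shows the convergence is uniform over all integers $r>1$, as desired.
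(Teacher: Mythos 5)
Your proof is correct, and it takes a genuinely different route from the paper's. The paper writes $a=p^cb$ with $p\nmid b$, uses $\gcd(r^{p^c}-1,r^b-1)=r-1$ to factor the $p'$-part as a product of two terms, and then handles the $b$-factor with Feit's theorem on large Zsigmondy primes and the $p^c$-factor with a lemma of Lewis and Riedl on $(r^{p^c}-1)_p$. You instead attack the $p$-part of $r^a-1$ directly: the lifting-the-exponent computation (whose case analysis you carry out correctly, including the different shape of the $p=2$ statement) gives $(r^a-1)_p\le r^{C}a_p$ with $C$ depending only on $p$, after which $(r^a-1)_{p'}\ge (r^a-1)/(r^Ca_p)$ is exponentially large in $a$ while the denominator $a\cdot a_p\le a^2$ is only polynomial. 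This is more elementary and self-contained than the paper's argument --- it avoids Zsigmondy primes entirely --- and it yields an explicit lower bound $2^{a-C}/(2a^3)$ that is visibly uniform in $r$, which is the uniformity the parenthetical in the statement demands and which the paper's two-factor limit argument makes less transparent. The only cosmetic point is that your final inequality $r^{a-C}\ge 2^{a-C}$ requires $a\ge C$, which is harmless for a statement about $a\to\infty$ since $C$ depends only on the fixed prime $p$.
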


\begin{proof} Let $a=p^cb$ with $p\nmid b$. Since
$\gcd(r^{p^c}-1,r^{b}-1)=r-1$ and $r^a-1$ is divisible by both
$r^{p^c}-1$ and $r^{b}-1$, we have
\[(r^a-1)_{p'}\geq \frac{(r^{p^c}-1)_{p'}(r^{b}-1)_{p'}}{(r-1)_{p'}}.\]
It follows that
\[\frac{(r^a-1)_{p'}}{a\cdot a_p}\geq \frac{(r^{p^c}-1)_{p'}}{p^{{2c}}}\cdot \frac{(r^{b}-1)_{p'}}{b(r-1)_{p'}}.\]

By Feit's theorem~\cite{Feit,Roitman} on large Zsigmondy primes, for
each positive integer $N$, there exists a Zsigmondy prime $q$ such
that $(r^{b}-1)_q>bN+1$ for all but finitely many pairs of integers
$(b,r)$ with $b>2$ and $r>1$. We note that when $b$ is large enough
then $q\neq p$. Therefore \[(r^{b}-1)_{p'}>(bN+1)(r-1)_{p'},\] which
implies that
\[\frac{(r^{b}-1)_{p'}}{b(r-1)_{p'}} \rightarrow \infty \text{ as } b \rightarrow\infty.\]

It remains to prove that
\[\frac{(r^{p^c}-1)_{p'}}{p^{{2c}}}\rightarrow \infty \text{ as } c\rightarrow \infty.\]
First we assume that $p\nmid (r-1)$. Since $r^{p^c}\equiv r~(\bmod
p)$, we have $p\nmid (r^{p^c}-1)$ so that
$(r^{p^c}-1)_{p'}=r^{p^c}-1$ and therefore we are done in this case.
So let us consider the case $p \mid (r-1)$.
Using~\cite[Lemma~8]{Lewis-Riedl}, we see that
$(r^{p^c}-1)_p=p^c(r-1)_p$ if $p>2$ or $(r-1)_p>p$ and
$(r^{p^c}-1)_p=p^c(r+1)_p$ otherwise. In any case, we always have
$(r^{p^c}-1)_p\leq p^c(r+1)$. It follows that
\[\frac{(r^{p^c}-1)_{p'}}{p^{{2c}}} \geq
\frac{r^{p^c}-1}{p^{3c}(r+1)}.\] It is now easy to see that
$(r^{p^c}-1)_{p'}/{p^{{2c}}}\rightarrow \infty \text{ as }
c\rightarrow \infty$, as desired.
\end{proof}


\section{$p$-Regular classes of the simple groups of Lie
type}\label{section-regular-class-Lietype-groups}

In this section, we prove Theorem~\ref{main theorem} for simple
groups of Lie type. The next three lemmas provide a lower bound for
the number of $p$-regular classes of a simple group of Lie type.
These bounds are probably not the best possible but they are enough
for our purpose. We basically make use of a result of L.~Babai,
P.\,P.~P\'{a}lfy, and J.~Saxl~\cite{Babai-Palfy-Saxl} on the
proportion of $p$-regular elements in finite simple groups and a
recent result of J.~Fulman and R.\,M.~Guralnick on centralizer sizes
in finite classical groups, see
\cite{Fulman-Guralnick,Burkett-Nguyen}. The detailed structure of
the centralizers of semisimple elements in finite classical groups
can be found in \cite{Hu}.

\begin{lemma}\label{lemma for linear and unitary groups} Let $G=\PSL_n(q)$ or $\PSU_n(q)$ be simple. For any prime $p$, we have
$$k_{p'}(G)>\frac{q^{n-1}}{6n^3}.$$
\end{lemma}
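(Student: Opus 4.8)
The plan is to bound $k_{p'}(G)$ below by counting $p$-regular semisimple classes, for which I would exploit the fact that the number of semisimple conjugacy classes in a finite group of Lie type is controlled by the order of a maximal torus. Concretely, I would first reduce to $\SL_n(q)$ or $\SU_n(q)$ (or even $\GL_n(q)$, $\GU_n(q)$), since the number of $p$-regular classes of $\PSL_n(q)$ differs from that of $\SL_n(q)$ only by a factor bounded in terms of $\gcd(n,q\mp 1)\le n$, and any semisimple class is automatically $p$-regular when $p$ is the defining characteristic. The key point is that $\GL_n(q)$ contains a maximally split cyclic (Coxeter) torus of order $q^n-1$ in the linear case and $q^n-(-1)^n$ in the unitary case; the regular semisimple elements lying in such a torus fall into relatively few orbits under the Weyl group action, so they contribute on the order of $q^{n-1}/n$ distinct classes.

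The main step is therefore a torus-counting argument. I would let $s$ range over the $p$-regular elements of a fixed cyclic torus $T$ of order roughly $q^n-1$ (in the linear case) inside $\GL_n(q)$, and note that two such elements are $\GL_n(q)$-conjugate only if they are conjugate under $N_{\GL_n(q)}(T)/T$, which is cyclic of order dividing $n$ (it is generated by a Coxeter-type element / the relevant Galois action of order $n$). Hence the number of distinct semisimple classes meeting $T$ is at least $|T|_{p'}/n$ times a correction for the elements fixed by nontrivial automorphisms. Since the defining characteristic divides $q$, every element of $T$ is automatically $p$-regular when $p\mid q$; when $p\nmid q$ one instead removes the $p$-part, and the bound $|T|_{p'}\ge |T|/|T|_p$ together with a Zsigmondy-type estimate (cf.\ the reasoning in Lemma~\ref{lemma-numbertheory}) keeps $|T|_{p'}$ comparable to $q^n$. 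Dividing by the $n$ from the torus normalizer and by the centre $Z$ of order at most $n$ when passing to $\PSL_n$ or $\PSU_n$, one lands at a bound of the shape $q^{n-1}/(\text{const}\cdot n^c)$.

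The hard part will be controlling the loss of factors when descending from $\GL_n$ (or $\GU_n$) to the simple group and simultaneously discarding the $p$-singular elements cleanly in a way uniform in the prime $p$. The subtlety is that $p$ might be the defining characteristic or a torus prime, so I cannot simply quote one proportion estimate; this is precisely where I would invoke the Babai--P\'alfy--Saxl result on the proportion of $p$-regular elements together with the Fulman--Guralnick centralizer bounds (\cite{Babai-Palfy-Saxl,Fulman-Guralnick,Burkett-Nguyen}) to guarantee that a positive, $n$-controlled proportion of semisimple classes survives regardless of how $p$ relates to $q$. I expect that combining a lower bound on the number of semisimple classes of $\GL_n(q)$ — which is exactly $q^{n-1}(q-1)$ — with the proportion of those that are $p$-regular, and then tracking the index factors $\gcd(n,q-1)$ and the Weyl group order $n$, yields the stated $q^{n-1}/(6n^3)$.

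I would finish by verifying that the crude constants $6$ and the exponent $n^3$ comfortably absorb all the accumulated factors of $n$ (from $|Z|\le n$, from $|N(T)/T|\le n$, and from the conjugation-orbit bound $|G:N|$ via Lemma~\ref{normal}), so that no case-by-case optimisation is needed; the inequality is deliberately loose precisely so that a single uniform estimate covers both the linear and unitary families and every prime $p$ at once.
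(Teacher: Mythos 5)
Your main line of argument --- counting $p$-regular elements in a single Coxeter torus $T$ and dividing by $|N(T)/T|$ and the centre --- has a genuine gap at the step where you claim that ``a Zsigmondy-type estimate keeps $|T|_{p'}$ comparable to $q^n$.'' This is false for an adversarial choice of $p$: take $G=\PSL_n(2)=\GL_n(2)$ and $p=2^n-1$ a Mersenne prime; then the Coxeter torus is cyclic of order exactly $p$, so $|T|_{p'}=1$ and the torus contains no nontrivial $p$-regular element whatsoever. Zsigmondy/Feit guarantees the \emph{existence} of a large primitive prime divisor of $q^n-1$, but gives no lower bound on the $p'$-part of $|T|$ for a \emph{fixed} prime $p$ that happens to swallow most of $q^n-1$ (this is also why the paper's Lemma~\ref{lemma-numbertheory} must carefully separate the $p$-part $a_p$ rather than asserting such a bound). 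Since the lemma must hold uniformly in $p$, the single-torus count cannot be repaired without changing tori depending on $p$, at which point the argument is no longer the one you wrote.

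Your escape hatch --- invoking Babai--P\'alfy--Saxl together with the Fulman--Guralnick centralizer bounds --- is in fact the entirety of the paper's proof, but as stated it conflates two different quantities and omits the one step that makes the argument work. Babai--P\'alfy--Saxl bounds the proportion of $p$-regular \emph{elements} (at least $1/(2n)$ for a classical group acting on an $(n-1)$-dimensional projective space), not the proportion of $p$-regular \emph{semisimple classes}; the conversion from an element count to a class count is done by dividing by the maximum class size, i.e.
\[
k_{p'}(G)\;\ge\;\frac{\#\{p\text{-regular elements}\}}{\max_x |x^G|}\;\ge\;\frac{|G|/(2n)}{|G|/c_{\min}}\;=\;\frac{c_{\min}}{2n},
\]
where $c_{\min}$ is the Fulman--Guralnick lower bound $q^{n-1}(q-1)/\bigl(e(1+\log_q(n+1))\bigr)$ for $\GL_n(q)$, adjusted by $(q-1)(q-1,n)$ when passing to $\PSL_n(q)$. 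Once you argue this way, the semisimple-class parametrisation, the torus normalizer, and the Weyl-group bookkeeping are all unnecessary, and the factors of $n$ absorbed by $6n^3$ come only from $2n$, from $(q-1,n)\le n$, and from $e(1+\log_q(n+1))$. You should either commit to this element-counting route from the start or supply a genuinely torus-based argument that handles the case where $p$ exhausts the order of your chosen torus.
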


\begin{proof} A lower bound for the smallest centralizer size in finite classical groups is given
in~\cite[\S6]{Fulman-Guralnick}. In particular, the centralizer size
of an element of $\GL_n(q)$ is at least
\[\frac{q^{n-1}(q-1)}{e(1+\log_q(n+1))}.\] Therefore,
for every $x\in\PSL_n(q)$, we have
\begin{align*}|\bC_{\PSL_n(q)}(x)|&\geq
\frac{q^{n-1}(q-1)}{e(1+\log_q(n+1))(q-1)(q-1,n)}\\&=\frac{q^{n-1}}{e(1+\log_q(n+1))(q-1,n)}.\end{align*}

On the other hand, by~\cite[Theorem~1.1]{Babai-Palfy-Saxl}, if $G$
is a simple classical group acting naturally on a projective space
of dimension $m-1$, then the proportion of $p$-regular elements of
$G$ is at least $1/(2m)$. Therefore the number of $p$-regular
elements in $\PSL_n(q)$ is at least
\[\frac{1}{2n}|\PSL_n(q)|.\]
We thus obtain a lower bound on the number of classes of $p$-regular
elements in $\PSL_n(q)$:
\[k_{p'}(\PSL_n(q))\geq\frac{q^{n-1}}{2ne(1+\log_q(n+1))(q-1,n)},\] and the lemma follows for the linear groups.

The arguments for the unitary groups follow similarly by using the
fact that the centralizer size of an element of $\GU_n(q)$ is at
least $q^n(\frac{1-1/q^2}{e(2+\log_q(n+1))})^{1/2}$.
\end{proof}

\begin{lemma}\label{lemma for symplectic and orthogoanl groups} Let $G=\PSp_{2n}(q)$, $\Omega_{2n+1}(q)$, or $\mathrm{P}\Omega_{2n}^\pm(q)$ be simple. For any prime $p$, we have
$$k_{p'}(G)>\frac{q^{n}}{120n^2}.$$
\end{lemma}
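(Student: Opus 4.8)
The plan is to mirror the proof of Lemma~\ref{lemma for linear and unitary groups} almost verbatim, since the two ingredients---a lower bound on the proportion of $p$-regular elements and a lower bound on the smallest centralizer size---are available uniformly across the classical groups. First I would invoke \cite[Theorem~1.1]{Babai-Palfy-Saxl}: if $G$ is a simple classical group acting naturally on a projective space of dimension $m-1$, then the proportion of $p$-regular elements is at least $1/(2m)$. For $\PSp_{2n}(q)$ and $\mathrm{P}\Omega_{2n}^\pm(q)$ the natural module has dimension $2n$, so $m=2n$, while for $\Omega_{2n+1}(q)$ we have $m=2n+1$; in all cases $m\le 2n+1\le 3n$, so the proportion of $p$-regular elements is at least $1/(6n)$, and hence the number of $p$-regular elements of $G$ is at least $|G|/(6n)$.

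\medskip
Next I would bound the largest centralizer size in $G$ (equivalently, bound $|G|$ divided by the number of $p$-regular classes by bounding the maximal class size). The relevant input is the lower bound on the smallest centralizer size in finite classical groups from \cite[\S6]{Fulman-Guralnick}: for an element of the relevant matrix group $\widetilde G$ (namely $\Sp_{2n}(q)$, $\Omega_{2n+1}(q)$, or $\mathrm{O}^\pm_{2n}(q)$ up to scalars), the centralizer has order at least roughly $q^{n}$ times an explicit slowly-varying factor of the form $1/(e(c+\log_q(n+1)))$. Passing from $\widetilde G$ to the simple group $G=\PSp_{2n}(q)$ or $\mathrm{P}\Omega^\pm_{2n}(q)$ costs a factor of at most $|Z(\widetilde G)|\cdot(\text{outer diagonal factor})$, which is bounded by a small power of $\gcd$-type quantities (at most $4$ or so, in any event absorbed into the constant $120$). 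Combining, for every $x\in G$ we get
\[
|\bC_G(x)|\;\ge\;\frac{q^{n}}{C\,e\bigl(c+\log_q(n+1)\bigr)}
\]
for absolute constants $C,c$.

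\medskip
Finally I would assemble the two estimates exactly as in the linear case. The number of $p$-regular classes satisfies
\[
k_{p'}(G)\;\ge\;\frac{(\text{number of }p\text{-regular elements})}{\max_x|x^G|}
\;=\;\frac{\#\{p\text{-regular}\}\cdot\min_x|\bC_G(x)|}{|G|}
\;\ge\;\frac{1}{6n}\cdot\frac{q^{n}}{C\,e\bigl(c+\log_q(n+1)\bigr)},
\]
since $|G|/\max_x|x^G|=\min_x|\bC_G(x)|$ and each class has size at most $\max_x|x^G|$. It then remains only to check that $6n\cdot C\,e\bigl(c+\log_q(n+1)\bigr)<120n^2$ for all relevant $q\ge2$ and $n\ge1$ in the stated range of simple groups; the logarithmic factor $\log_q(n+1)$ grows far slower than $n$, so this inequality holds once the numerical constants are fixed, and one absorbs the diagonal-and-center factors and the small cases into the constant $120$.

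\medskip
The main obstacle I expect is purely bookkeeping rather than conceptual: making the centralizer lower bound of \cite{Fulman-Guralnick} fully explicit for each of the three families (symplectic, odd orthogonal, even orthogonal) with the correct power of $q$ (here $q^n$, matching half the natural dimension), and controlling the passage to the simple quotient so that all the accumulated constants genuinely fit under $1/(120n^2)$. One must also verify the small ranks and small $q$ by hand, since the asymptotic estimates can be loose there; but because the claimed bound $q^n/(120n^2)$ is deliberately weak, these finitely many checks and the crude absorption of the $\log$-term into the constant should present no real difficulty.
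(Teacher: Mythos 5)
Your proposal is correct and follows essentially the same route as the paper: combine the Babai--P\'alfy--Saxl lower bound $1/(2m)$ on the proportion of $p$-regular elements with the Fulman--Guralnick lower bound (of the form $q^n$ times a slowly varying factor) on the smallest centralizer size, pass to the simple quotient at the cost of a bounded factor, and divide the element count by the maximal class size. The paper works out only the $\mathrm{P}\Omega_{2n}^{\pm}(q)$ case explicitly, using Theorem~6.13 of \cite{Fulman-Guralnick} and the proportion $1/(4n)$, exactly as you outline.
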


\begin{proof} The proof uses the same ideas as the proof of the previous
lemma. We present here a proof for the case
$G=\mathrm{P}\Omega_{2n}^\pm(q)$ only.

By~\cite[Theorem~6.13]{Fulman-Guralnick}, the centralizer size of an
element of $\SO^\pm_{2n}(q)$ is at least
\[q^n\left(\frac{1-1/q}{2e(\log_q(4n)+4)}\right)^{1/2}.\] Therefore, for every
$x\in\mathrm{P}\Omega_{2n}^\pm(q)$ we have
\[|\bC_{\mathrm{P}\Omega_{2n}^\pm(q)}(x)|\geq\frac{q^n}{4}\left(\frac{1-1/q}{2e(\log_q(4n)+4)}\right)^{1/2}.\]
As the number of $p$-regular elements in
$\mathrm{P}\Omega_{2n}^\pm(q)$ is at least
$|\mathrm{P}\Omega_{2n}^\pm(q)|/(4n)$, it follows that
\[k_{p'}(\mathrm{P}\Omega_{2n}^\pm(q))\geq \frac{q^n}{16n}\left(\frac{1-1/q}{2e(\log_q(4n)+4)}\right)^{1/2}\]
and the lemma follows for the simple orthogonal groups in even dimension.
\end{proof}

\begin{lemma}\label{lemma for exceptional groups} Let $G_r(q)$ be a finite simple exceptional
group of Lie type of rank $r$ defined over a field of $q$ elements.
Then, for any prime $p$,
\[k_{p'}(G_r(q))>cq^r,\] where $c$ is a universal constant not
depending on $G_r(q)$ and $p$.
\end{lemma}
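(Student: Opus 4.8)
The plan is to run exactly the argument of the two preceding lemmas. For an arbitrary finite group $H$ each conjugacy class has size $|H|/|\bC_H(x)|\leq |H|/\min_{y\in H}|\bC_H(y)|$, so the number of $p$-regular classes satisfies
\[
k_{p'}(H)\;\geq\;\frac{\#\{\,p\text{-regular elements of }H\,\}}{|H|}\cdot\min_{y\in H}|\bC_H(y)|.
\]
Thus for $H=G_r(q)$ I need two uniform estimates: a lower bound for the proportion of $p$-regular elements, and a lower bound of the shape $c\,q^r$ for the smallest centralizer order. The decisive simplification in the exceptional case is that the Lie rank $r$ is bounded (by $8$) and there are only finitely many Lie types, so every quantity that degrades with the rank in the classical lemmas becomes an absolute constant here.

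For the first estimate I would again appeal to \cite[Theorem~1.1]{Babai-Palfy-Saxl}: the proportion of $p$-regular elements in a finite simple group of Lie type is bounded below in terms of the rank, so for the bounded-rank exceptional groups this proportion exceeds a positive absolute constant $c_1$ independent of $q$, $p$ and the type. For the second estimate I would avoid the explicit classical-group centralizer formulas of \cite{Fulman-Guralnick} and instead use the algebraic-group fact that every element of a connected reductive group of rank $r$ has centralizer of dimension at least $r$ (with equality precisely for regular elements). Writing $G$ for the ambient algebraic group with Frobenius $F$ and $x\in G^F$, this gives $\dim\bC_G(x)\geq r$, whence $|\bC_{G^F}(x)|\geq|\bC_G(x)^{\circ F}|\geq (q-1)^r$. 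Passing from $G^F$ to the simple group $G_r(q)$ alters centralizer orders only by a factor bounded in terms of the type (coming from $\Center(G^F)$ and the diagonal part), so $\min_x|\bC_{G_r(q)}(x)|\geq c_2\,q^r$ with $c_2$ depending only on the finitely many types.

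Combining the two estimates yields $k_{p'}(G_r(q))\geq c_1c_2\,q^r$, and taking $c$ to be the minimum of $c_1c_2$ over the finitely many exceptional types finishes the proof. The step I expect to require the most care is the centralizer bound: one must keep track of the passage between the finite reductive group $G^F$, its central and derived quotients, and the actual simple group, while keeping the constant independent of $p$. This is exactly why I would phrase the centralizer lower bound through $\dim\bC_G(x)$ in the algebraic group, valid for every $x$ whether semisimple or not, rather than through a count of semisimple classes, which would only control the defining-characteristic case and would not be uniform in the arbitrary prime $p$.
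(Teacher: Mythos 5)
Your argument has the same two-ingredient skeleton as the paper's: bound $k_{p'}(G_r(q))$ below by (proportion of $p$-regular elements) $\times$ (minimal centralizer order), take the proportion bound from \cite[Theorem~1.1]{Babai-Palfy-Saxl} (the paper quotes the explicit constant $1/15$ for exceptional groups), and use boundedness of the rank to make all constants absolute. The only real divergence is the second ingredient: the paper simply cites \cite[Theorem~6.15]{Fulman-Guralnick}, which gives $|\Centralizer_{G_r(q)}(x)|\geq q^r/(A(\min\{q,r\})(1+\log_q r))\geq q^r/(32A)$ directly, whereas you rederive a bound of the same shape from the algebraic-group facts $\dim\Centralizer_{\mathbf G}(x)\geq\rank(\mathbf G)$ and $|H^F|\geq(q-1)^{\dim H}$ for connected $F$-stable $H$, then descend to the simple quotient. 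That substitution is legitimate and arguably more self-contained, but it carries one subtlety you should flag explicitly: for the very twisted groups ($\Sz(q)$ and the Ree groups) the $r$ in the statement must be the twisted rank (otherwise the claimed inequality $k_{p'}>cq^r$ is false, e.g.\ $\Sz(q)$ has only about $q$ classes), while your dimension bound involves the absolute rank of the ambient group and the parameter entering $|H^F|\geq(q_0-1)^{\dim H}$ is $q_0=\sqrt{q}$ rather than $q$. The two discrepancies cancel up to an absolute constant ($(\sqrt q-1)^2\geq c(q-1)$, and absolute rank exceeds twisted rank), so the proof survives, but as written the line ``$|\bC_{G^F}(x)|\geq(q-1)^r$'' is not literally what the general estimate gives in those cases. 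The passage to the simple group costs only $|\Center(G^F)|\leq 3$ for exceptional types, as you say.
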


\begin{proof} By~\cite[Theorem~6.15]{Fulman-Guralnick}, for every $x\in G_r(q)$, we
have
\[|\Centralizer_{G_r(q)}(x)|\geq
\frac{q^r}{A(\min\{q,r\})(1+\log_qr)}\geq\frac{q^r}{32A},\] where
$A$ is an absolute constant not depending on the group $G_r(q)$. For
a prime $p$, it is shown in~\cite[Theorem~1.1]{Babai-Palfy-Saxl}
that the proportion of $p$-regular elements in a simple exceptional
group of Lie type is greater than $1/15$. We deduce that
\[k_{p'}(G_r(q))>\frac{1}{15}\cdot \frac{q^r}{32A}= \frac{q^r}{480A}.\]
The lemma now follows with $c:=1/(480A)$.
\end{proof}

Lemmas~\ref{lemma for linear and unitary groups}, \ref{lemma for
symplectic and orthogoanl groups}, and \ref{lemma for exceptional
groups} imply that a simple group of Lie type has many classes of
$p$-regular elements when its order is large. In order to prove
Theorem~\ref{main theorem}, we need a bit more than that.

\begin{proposition}\label{Out theorem} Let $G$ be a simple group of Lie
type and let $k^\ast_{p'}(G)$ denote the number of $\Aut(G)$-classes
of $p$-regular elements in $G$. Then we have
\[k^\ast_{p'}(G)\rightarrow \infty \text{ as } |G|\rightarrow \infty.\]
\end{proposition}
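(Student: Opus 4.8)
The plan is to reduce the statement $k^\ast_{p'}(G)\to\infty$ as $|G|\to\infty$ to the lower bounds already established in Lemmas~\ref{lemma for linear and unitary groups}, \ref{lemma for symplectic and orthogoanl groups}, and \ref{lemma for exceptional groups}. Those lemmas bound $k_{p'}(G)$ from below, but the proposition concerns $k^\ast_{p'}(G)$, the number of \emph{$\Aut(G)$-classes} rather than ordinary conjugacy classes. The key observation is that $\Aut(G)$-classes are unions of ordinary $G$-conjugacy classes, and the number of $G$-classes fused together is at most $|\Out(G)|$ (since $\Inn(G)$ already acts as conjugation). Hence
\[k^\ast_{p'}(G)\geq \frac{k_{p'}(G)}{|\Out(G)|}.\]
So it suffices to show the right-hand side tends to infinity with $|G|$.

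First I would recall the structure of $\Out(G)$ for a simple group of Lie type $G$ defined over $\FF_q$ with $q=\ell^f$: by the standard description, $\Out(G)$ is the product of diagonal, field, and graph automorphisms, and its order is bounded by $d\cdot f\cdot g$ where $d$ is the order of the diagonal part (bounded by a small function of the rank, e.g. $\gcd(n,q-1)$ for type $A$), $f=\log_\ell q$ is the number of field automorphisms, and $g\leq 6$ accounts for graph automorphisms. Thus $|\Out(G)|$ grows at most polynomially in the rank and logarithmically in $q$. In particular there is an explicit bound of the form $|\Out(G)|\leq c' n f$ (for classical $G$ of rank $\sim n$ over $\FF_q$), while the lemmas give $k_{p'}(G)$ growing like $q^{\,r}$ up to a factor polynomial in the rank.

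The main step is then to combine these two estimates. For the classical families, dividing the lower bounds $q^{n-1}/(6n^3)$ or $q^n/(120 n^2)$ by $|\Out(G)|\leq c' n f = c' n\log_\ell q$ still yields a quantity that tends to infinity as $|G|\to\infty$: since $|G|\to\infty$ forces either $q\to\infty$ or $n\to\infty$, and $q^{n}$ dominates any fixed polynomial in $n$ times $\log_\ell q$, the quotient is unbounded. For the exceptional groups the rank $r$ is bounded, so $|\Out(G)|$ is bounded by a constant times $\log_\ell q$, and $k_{p'}(G)>cq^r\geq cq$ easily beats this as $q\to\infty$. I would organize the argument by treating the finitely many exceptional types and each classical family in turn, in each case writing the quotient $k_{p'}(G)/|\Out(G)|$ and checking it diverges.

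The hard part will be handling the case distinction cleanly, specifically verifying that the field-automorphism factor $f=\log_\ell q$ never overwhelms the $p$-regular class count. For fixed $q$ with $n\to\infty$ the factor $f$ is constant and harmless, and for $n$ bounded with $q\to\infty$ the exponential growth $q^r$ dwarfs the logarithmic $f$; the only delicate regime is where both $q$ and $n$ grow, but there the $q^{n}$ growth is overwhelming. I do not expect any genuine obstacle beyond bookkeeping, since the lemmas were deliberately stated with denominators polynomial in the rank precisely so that division by $|\Out(G)|$ preserves divergence. One technical point I would be careful about is the diagonal automorphism contribution in type $A$, where $d=\gcd(n,q-1)$ can be as large as roughly $\min\{n,q\}$; but even then $d\leq n$, so $d f\leq n\log_\ell q$ remains polynomial-times-logarithmic and is still dominated by $q^{n-1}/(6n^3)$.
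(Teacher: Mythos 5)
Your proposal is correct and follows essentially the same route as the paper: both start from the inequality $k^\ast_{p'}(G)\geq k_{p'}(G)/|\Out(G)|$, bound $|\Out(G)|$ by roughly $2nf$ for classical groups (respectively $6f$ for exceptional groups), and divide the lower bounds of Lemmas~\ref{lemma for linear and unitary groups}, \ref{lemma for symplectic and orthogoanl groups}, and~\ref{lemma for exceptional groups} by this quantity to see that the quotient diverges as $q\rightarrow\infty$ or the rank tends to infinity. No substantive differences to report.
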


\begin{proof} We remark that $k^\ast_{p'}(G)\geq
k_{p'}(G)/|\Out(G)|$ where $\Out(G)$ is the outer automorphism group
of $G$. First we assume that $G=G_r(q)$ is a finite simple
exceptional group of Lie type of rank $r$ defined over a field of
$q=\ell^f$ elements, where $\ell$ is prime. Then it is known that
$|\Out(G)|\leq 6f$ (see~\cite[p.~58]{Gorenstein} for instance). It
follows by Lemma~\ref{lemma for exceptional groups} that
\[k^\ast_{p'}(G)>\frac{cq^r}{6f}.\] It is now easy to see that $k^\ast_{p'}(G)\rightarrow \infty$ as $q\rightarrow\infty$.

The simple classical groups are treated similarly. We will prove
only the case of linear groups as an example. So assume that
$G=\PSL_n(q)$ where $q=\ell^f$. Then $|\Out(G)|=(2,q-1)f$ when $n=2$
and $|\Out(G)|=2(n,q-1)f$ when $n>2$. In particular, $|\Out(G)|\leq
2nf$. Therefore, by Lemma~\ref{lemma for linear and unitary groups},
we have
\[k^\ast_{p'}(G)>\frac{q^{n-1}}{12fn^4}.\]
Again, one can easily show that $q^{n-1}/(12fn^4)\rightarrow\infty$
as either $q\rightarrow \infty$ or $n\rightarrow\infty$. The proof
is complete.
\end{proof}


\section{First part of Theorem~\ref{main theorem}}\label{section4}

The goal of this section is to prove the following result.

\begin{theorem}\label{main1}
Let $G$ be a finite group. Then $|G/\bO_{\infty}(G)|$ is
$k_{p'}(G)$-bounded.
\end{theorem}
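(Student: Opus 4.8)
The plan is to reduce the statement about an arbitrary finite group $G$ to a statement about its generalized Fitting quotient and then exploit the structure of the latter as a direct product of simple groups permuted by $G$. Write $R=\bO_\infty(G)$ for the solvable radical and consider $\bar{G}=G/R$. By Lemma~\ref{lemma k2(G/N) <k2(G)} we have $k_{p'}(\bar{G})\leq k_{p'}(G)=k$, so it suffices to bound $|\bar{G}|$ in terms of $k_{p'}(\bar{G})$; in other words I may assume from the start that $\bO_\infty(G)=1$ and aim to bound $|G|$. Under this assumption the generalized Fitting subgroup $\bF^\ast(G)$ equals the layer $E(G)$, which is a direct product $S_1\times\cdots\times S_m$ of non-abelian simple groups, and $G$ embeds into $\Aut(\bF^\ast(G))$ because $\bC_G(\bF^\ast(G))\subseteq\bF^\ast(G)$ when $\bO_\infty(G)=1$.

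The next step is to bound everything in terms of $k$. First I would bound the number of simple direct factors $m$ and their individual orders. Group the factors $S_i$ into $G$-orbits under the conjugation action. If $T=S_1\times\cdots\times S_t$ is one such orbit-product with $t$ factors all isomorphic to a simple group $S$, then $T\trianglelefteq G$, and by Lemma~\ref{normal} together with Lemma~\ref{lemma k2(G/N) <k2(G)} the quantity $k_{p'}(T)$ is controlled by $k$ and $|G:T|$; more usefully, a $p$-regular class of $S$ in each coordinate yields many distinct $p$-regular classes of the product $T$, so $k_{p'}(T)\geq k_{p'}(S)^{\,t}$ grows with $t$, forcing $t$ to be $k$-bounded. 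To bound $|S|$ itself I would invoke Proposition~\ref{Out theorem} for simple groups of Lie type and the trivial finiteness for sporadic and alternating groups: since $G/\bF^\ast(G)$ embeds in $\prod\Out(S_i)$ and the wreath action permutes orbit factors, the number of $\Aut(S)$-classes $k^\ast_{p'}(S)$ of $p$-regular elements is bounded above in terms of $k$, and Proposition~\ref{Out theorem} then says $|S|$ is $k$-bounded. Finiteness of the list of possible $S$ of bounded order, the bound on $m$, and the embedding $G\hookrightarrow\prod_i\Aut(S_i)$ together bound $|G|$.

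The key structural input I would use is that $p$-regular classes of a normal subgroup fuse into few $\Aut$-classes, so the counting lemmas of Section~\ref{section-preliminaries} transfer a bound on $k_{p'}(G)$ into a bound on $k^\ast_{p'}(S_i)$ for each simple factor. Concretely: a lower bound for the number of $G$-classes (hence $\Aut(\bF^\ast(G))$-classes) of $p$-regular elements inside $\bF^\ast(G)$ is obtained by taking products of $\Aut(S)$-class representatives across coordinates, and this lower bound must not exceed a $k$-bounded quantity coming from Lemma~\ref{normal}. This simultaneously constrains $t$, the orders $|S_i|$, and thus the total number of factors $m$.

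The hard part, and the reason Proposition~\ref{Out theorem} was proved in the previous section, is ensuring that the bound survives the passage to $\Aut(G)$-classes rather than ordinary conjugacy classes: a priori many $G$-classes of a simple factor could collapse under the outer action coming from field and graph automorphisms, and only for groups of Lie type is this delicate. The lower bounds of Lemmas~\ref{lemma for linear and unitary groups}, \ref{lemma for symplectic and orthogoanl groups}, and~\ref{lemma for exceptional groups}, once divided by $|\Out(S)|$ as in Proposition~\ref{Out theorem}, are exactly what guarantees $k^\ast_{p'}(S)\to\infty$ with $|S|$, closing the argument; assembling these pieces into the final $k$-bounded estimate for $|G|$ is then routine bookkeeping.
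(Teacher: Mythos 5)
Your overall strategy is the one the paper follows: reduce to $\bO_\infty(G)=1$ via Lemma~\ref{lemma k2(G/N) <k2(G)}, note that $\Soc(G)=S_1\times\cdots\times S_t$ is a product of non-abelian simple groups containing its own centralizer so that $G\hookrightarrow\Aut(\Soc(G))$, and then bound $t$ and each $|S_i|$ using $p$-regular elements of the socle together with Proposition~\ref{Out theorem}. However, the step where you bound the number of factors has a genuine logical hole. You argue $k_{p'}(S)^{t}\leq k_{p'}(T)$ and then appeal to Lemma~\ref{normal} for an upper bound on $k_{p'}(T)$; but Lemma~\ref{normal} only gives $k_{p'}(T)\leq |G:T|\,k_{p'}(G)$, and $|G:T|$ is exactly the kind of quantity you are trying to bound, so the chain $k_{p'}(S)^t\leq |G:T|\,k$ forces nothing about $t$. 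The inequality that actually does the work is the trivial one: the number of \emph{$G$-classes} of $p$-regular elements meeting $\Soc(G)$ is at most $k_{p'}(G)$, with no index factor. To get a matching lower bound you cannot simply multiply class counts across coordinates, because $G$ permutes the coordinates and fuses tuples differing by a permutation; the paper's device is to take $g_i=(x,\dots,x,1,\dots,1)$ with $i$ nontrivial entries, so that the support size is a $G$-invariant and the $g_i$ lie in $t$ pairwise distinct $G$-classes, giving $t\leq k$ directly. Your multiset of $\Aut(S)$-class representatives would also work, but you need to say explicitly that you are counting $G$-orbits (multisets), not $T$-classes (tuples), and compare against $k$ rather than against $|G:T|\,k$.

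The second gap is the alternating groups: you dispose of them by ``the trivial finiteness for sporadic and alternating groups,'' but $\Al_n$, $n\geq 5$, is an infinite family, so finiteness is not available. An argument is needed here, and the paper supplies one: if $n$ is large then $|\Al_n|$ is divisible by more than $k+1$ primes different from $p$, each contributing at least one $G$-class of $p$-regular elements, contradicting $k_{p'}(G)=k$. (Alternatively, since $|\Out(\Al_n)|\leq 4$, the statement of Proposition~\ref{Out theorem} extends to alternating groups and the Lie-type argument applies verbatim; but some such remark must be made.) With these two repairs your proof coincides with the paper's.
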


\begin{proof} By Lemma~\ref{lemma k2(G/N) <k2(G)}, we have $k_{p'}(G/\bO_\infty(G))\leq k_{p'}(G)$. Therefore, in order to
prove the theorem, it suffices to assume that $\bO_\infty(G)$ is
trivial. We now need to show that $|G|$ is bounded in terms of
$k_{p'}(G)$.

Let $\Soc(G)$ denote the socle of $G$, which in this case coincides
with the generalized Fitting subgroup of $G$. Since the generalized
Fitting subgroup contains its own centralizer, we have
$\Centralizer_G(\Soc(G))\subseteq \Soc(G)$. It follows that $G$ is
embedded in $\Aut(\Soc(G))$, the automorphism group of $\Soc(G)$. As
$|\Aut(\Soc(G))|$ is bounded in terms of $|\Soc(G)|$, it is enough
to bound $|\Soc(G)|$. Since $G$ has no nontrivial solvable normal
subgroup, $\Soc(G)$ is isomorphic to a direct product of non-abelian
simple groups, say
\[\Soc(G)\cong S_1\times S_2\times \cdots\times S_t.\]

First, we will show that $t$ is at most $k_{p'}(G)$. For each $1\leq
i\leq t$, choose a nontrivial $p$-regular element $x_i$ in $S_i$.
Consider the elements
\[g_i=(x_1,x_2,\cdots, x_i,1,\cdots,1)\in \Soc(G).\]
As $\ord(g_i)=\lcm(\ord(x_1),\ord(x_2),\cdots,\ord(x_i))$, these $t$
elements of $\Soc(G)$ are all $p$-regular. Moreover, since $G$
permutes the direct factors $S_1$, $S_2$,..., $S_t$ of $\Soc(G)$,
the elements $g_1$, $g_2$,..., $g_t$ belong to different conjugacy
classes of $G$. Hence, $G$ has at least $t$ classes of $p$-regular
elements so that $t\leq k_{p'}(G)$.

Next, we will show that each $|S_i|$ is bounded in terms of
$k_{p'}(G)$ for every $i$. Since there are finitely many sporadic
simple groups, we are left with two cases:

\medskip

(i) Suppose that $S_i=\Al_n$ is an alternating group. We will show
that $n$ is bounded in terms of $k_{p'}(G)$. Assume, to the
contrary, that $n$ can be arbitrarily large. We choose an $n$ large
enough such that the number of primes smaller than $n$ is at least
$k_{p'}(G)+2$. In that case, $|\Al_n|=n!/2$ is divisible by at least
$k_{p'}(G)+1$ primes different from $p$. For each such prime $q$,
there is at least one $G$-conjugacy class of $q$-elements.
Therefore, we deduce that $G$ would have more than $k_{p'}(G)$
classes of $p$-regular elements, a contradiction.

\medskip

(ii) Suppose that $S_i$ is a simple group of Lie type. Assume that
$|S_i|$ can be arbitrarily large. Then, by Proposition~\ref{Out
theorem}, the number of $\Aut(S_i)$-classes of $p$-regular elements
in $S_i$, denoted by $k^\ast_{p'}(S_i)$, would be arbitrarily large
as well. However, if $x$ and $y$ lie in different
$\Aut(S_i)$-classes of $S_i$, then $(1,...,1,x,1,...,1)$ and
$(1,...,1,y,1,...,1)$ lie in different $G$-classes, and we therefore
deduce that the number of $p$-regular classes of $G$ would be
arbitrarily large. We have shown that $|S_i|$ is bounded in terms of
$k_{p'}(G)$ and the proof is now complete.
\end{proof}


\section{$p$-Singular classes of simple
groups}\label{section-singular-class-simple-groups}

In this section, we prove an analogue of Proposition~\ref{Out
theorem} for $p$-singular conjugacy classes. The following will be
needed in the proof of the first part of Theorem~\ref{main theorem
for p-singular classes}.

\begin{proposition}\label{proposition 2-singular class} Let $p$ be a fixed prime and $S$
a non-abelian simple group with $|S|$ divisible by $p$. Assume that
$S\neq \PSL_2(p^f)$. Assume furthermore that $S\neq \Sz(2^f)$ if
$p=2$ and $S\neq \ta G_2(3^f)$ if $p=3$. Let $k^\ast_p(S)$ denote
the number of $\Aut(S)$-classes of $p$-singular elements inside $S$.
Then we have
\[k^\ast_p(S)\rightarrow \infty \text{ as } |S|\rightarrow \infty.\]
\end{proposition}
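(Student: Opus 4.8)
The plan is to mirror the structure of Proposition~\ref{Out theorem}, replacing the count of $p$-regular $\Aut(S)$-classes by $p$-singular $\Aut(S)$-classes. As before, we use the inequality $k^\ast_p(S)\geq k_p(S)/|\Out(S)|$, and since $|\Out(S)|$ grows only like a polynomial in the rank times the logarithm of $q$ (explicitly $|\Out(S)|\leq 6f$ for exceptional types and $|\Out(S)|\leq 2nf$ for classical types, with $q=\ell^f$), it suffices to produce a lower bound for $k_p(S)$ that grows with $|S|$ fast enough to dominate these factors. So the heart of the matter is: show that a simple group $S$ of order divisible by $p$, excluding the prescribed families, has many classes of $p$-singular elements when $|S|$ is large.

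First I would dispose of the sporadic groups (finitely many, so irrelevant to an asymptotic statement) and the alternating groups $\Al_n$: for $\Al_n$ with $p\mid n!$, one produces many $p$-singular classes by taking permutations of the form (a fixed $p$-cycle) times (an $r$-cycle on disjoint points) for varying primes or cycle lengths $r$ coprime to $p$, which are $p$-singular and lie in distinct classes; the number of such classes grows with $n$, and $|\Out(\Al_n)|$ is bounded (at most $4$), so $k^\ast_p(\Al_n)\to\infty$. The main case is the groups of Lie type. Here the natural strategy is to exhibit a large supply of $p$-singular semisimple-times-unipotent classes. Concretely, since $p\mid |S|$, there is a nontrivial $p$-element $u$ in $S$. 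For a suitable torus or Levi subgroup, I would take elements of the form $su$ where $s$ ranges over many semisimple classes commuting with $u$ (for instance semisimple elements in the centralizer $\bC_S(u)$, or products involving regular semisimple elements of tori whose order is divisible by $p$). The point is that Lemmas~\ref{lemma for linear and unitary groups}, \ref{lemma for symplectic and orthogoanl groups}, and \ref{lemma for exceptional groups} already guarantee that the total number of semisimple (indeed $p$-regular) classes grows like a positive power of $q$; one wants to transfer a comparable lower bound to the $p$-singular classes by ``attaching'' a fixed nontrivial $p$-part.

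A cleaner route that I expect to work for the Lie-type case is to count directly: the number of $p$-singular elements equals $|S|$ minus the number of $p$-regular elements, and by~\cite[Theorem~1.1]{Babai-Palfy-Saxl} the proportion of $p$-regular elements is bounded away from $1$ precisely when $p$ genuinely divides the relevant order parameters, so a positive proportion of elements of $S$ are $p$-singular; dividing by the maximal centralizer size (bounded above by $|S|$ over a power of $q$, via the Fulman--Guralnick centralizer bounds used in Section~\ref{section-regular-class-Lietype-groups}) yields $k_p(S)\geq c\, q^{\,r}$ for a constant $c$, exactly as in the $p$-regular lemmas. Dividing by $|\Out(S)|\leq 6nf$ then gives $k^\ast_p(S)\to\infty$. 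The hard part, and the reason for the exclusions in the hypothesis, is establishing that the proportion of $p$-singular elements stays bounded away from $0$ uniformly: for the excluded families $\PSL_2(p^f)$, $\Sz(2^f)$, $\ta G_2(3^f)$, the $p$-singular elements form only a bounded number of classes (as the authors note right after the theorem statements), so no such uniform positive proportion exists there. I would therefore need the Babai--P\'alfy--Saxl bounds, or a direct Zsigmondy-prime argument showing the order of a maximal $p$-regular torus is large and hence many semisimple classes survive multiplication by a fixed $p$-element, to be delicate enough to exclude exactly these cases; verifying that the remaining families all admit the desired growth is the principal obstacle and will require a case analysis by Lie type, treating the centralizer structure of a fixed nontrivial $p$-element to ensure the commuting semisimple classes are plentiful.
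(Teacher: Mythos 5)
Your overall architecture matches the paper's: bound $k^\ast_p(S)\geq k_p(S)/|\Out(S)|$, handle $\Al_n$ by products of a $p$-cycle with disjoint cycles of varying coprime length, and for Lie type get $k_p(S)\geq$ (proportion of $p$-singular elements)$\times$(minimal centralizer size) via the Fulman--Guralnick bounds already used in Section~\ref{section-regular-class-Lietype-groups}. However, the central quantitative input of your ``cleaner route'' is not correctly identified, and the claim it rests on is false in general. You cite \cite[Theorem~1.1]{Babai-Palfy-Saxl} for the assertion that the proportion of $p$-regular elements is bounded away from $1$; that theorem goes in the opposite direction (it bounds the proportion of $p$-\emph{regular} elements from \emph{below}) and gives no upper bound, hence no lower bound on the $p$-singular proportion. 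The reference actually needed is Isaacs--Kantor--Spaltenstein \cite{Isaacs-Kantor-Spaltenstein}: Theorem~5.1 there gives $\mu(S)\geq (1/h(S))(1-1/p)$ in cross characteristic ($h(S)$ the Coxeter number, with two small exceptional cases), and Theorem~10.1 gives $\mu(S)\geq 2/(5q)$ in defining characteristic.

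More seriously, your assertion that ``the proportion of $p$-singular elements stays bounded away from $0$ uniformly'' for the non-excluded families is wrong in defining characteristic: there the proportion decays like $1/q$, even for, say, $\PSL_n(q)$ with $n\geq 3$. What saves the argument is not a uniform positive proportion but the interplay with the centralizer bound: the minimal centralizer size is at least $c\,q^{\,r}$, so $k_p(S)\gtrsim q^{\,r}\cdot q^{-1}$, which still tends to infinity precisely when the rank $r$ is at least $2$. The excluded families $\PSL_2(p^f)$, $\Sz(2^f)$, $\ta G_2(3^f)$ are exactly the rank-one groups in defining characteristic, where $q^{\,r}\cdot q^{-1}$ is bounded; so your proposed dichotomy (uniform positive proportion versus boundedly many classes) is not the right mechanism, and without the Isaacs--Kantor--Spaltenstein lower bounds and the rank-versus-$1/q$ comparison the defining-characteristic case of your argument does not go through. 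Your alternative first route (attaching a fixed $p$-element to many commuting semisimple classes) is plausible but is left as an unexecuted sketch requiring the very case analysis you defer.
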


\begin{proof} First we consider the alternating
groups of degree at least 5. By considering products of two distinct
cycles, one of which has length $p$, we see that
$k_p(\Al_n)\rightarrow \infty$ as $n\rightarrow \infty$. Moreover,
as $k^\ast_2(\Al_n)\geq k_2(\Al_n)/2$, it follows that
$k^\ast_2(\Al_n)\rightarrow \infty$ as $n\rightarrow \infty$ and the
proposition follows for the alternating groups. We therefore can
assume from now on that $G$ is a simple group of Lie type. Let
$h(S)$ denote the Coxeter number of the associated Weyl group of
$S$. Following~\cite{Isaacs-Kantor-Spaltenstein}, we denote by
$\mu(S)$ the probability that an element of $S$ is $p$-singular. In
other words, $\mu(S)$ is the proportion of $p$-singular elements in
$S$.

\medskip

(i) First, we assume that the characteristic of the underlying field
of $S$ is different from $p$.
By~\cite[Theorem~5.1]{Isaacs-Kantor-Spaltenstein}, we have
$\mu(S)\geq (1/h(S))(1-1/p)$ except when $p=3$ and $S=\PSL_3(q)$
with $(q-1)_3=3$ or $S=\PSU_3(q)$ with $(q+1)_3=3$, in which case
$\mu(S)=1/9$. (Here $x_3$ denotes the $3$-part of an integer $x$.)
So in any case, we always have
\[\mu(S)\geq \frac{p-1}{2ph(S)}>\frac{1}{4h(S)}.\]

The arguments for simple classical groups are fairly similar. We
present here a proof for the linear groups. So assume that $S\cong
\PSL_n(q)$. As mentioned in the proof of Lemma~\ref{lemma for linear
and unitary groups}, for $x\in\PSL_n(q)$, we have
\[|\Centralizer_{\PSL_n(q)}(x)|\geq\frac{q^{n-1}}{e(1+\log_q(n+1))(q-1,n)}.\] Therefore,
\begin{align*}k_p(\PSL_n(q))&\geq \frac{q^{n-1}}{4e(1+\log_q(n+1))(q-1,n)h(\PSL_n(q))}\\
&= \frac{q^{n-1}}{4en(1+\log_q(n+1))(q-1,n)}.
\end{align*}
Let $q=\ell^f$ where $\ell$ is a prime (unequal to $p$). Since
$|\Out(\PSL_n(q))|\leq 2f(n,q-1)$, we deduce that
\[k^\ast_2(\PSL_n(q))\geq
\frac{q^{n-1}}{8efn(1+\log_q(n+1))(q-1,n)^2}.\] It is now easy to
see that $k^\ast_2(\PSL_n(q))\rightarrow \infty$ as
$|\PSL_n(q)|\rightarrow\infty$.

Now we are left with the exceptional groups. Since $h(S)\leq 30$ for
every simple exceptional group $S$, we have $\mu(S)\geq 1/(120)$.
Assume that $S$ is defined over a field of $q$ elements with $r$ the
rank of the ambient algebraic group.
By~\cite[Theorem~6.15]{Fulman-Guralnick}, for every $x\in S$, we
have
\[|\Centralizer_S(x)|\geq
\frac{q^r}{A(\min\{q,r\})(1+\log_qr)}\geq\frac{q^r}{32A},\] where
$A$ is an absolute constant. It follows that
\[k_p(S)\geq \frac{q^r}{120\cdot 32A}=\frac{q^r}{3840A}.\] Thus
\[k_p^\ast(S)\geq \frac{q^r}{3840A\cdot 6f},\] where $q=\ell^f$ for
a prime $\ell\neq p$. Again, we see that $k^\ast_p(S)\rightarrow
\infty$ as $q\rightarrow \infty$.

\medskip

(ii) Next, we assume that the underlying field of $S$ has
characteristic $p$. By~\cite[Theorem
10.1]{Isaacs-Kantor-Spaltenstein}, we have
$$\mu(S)\geq \frac{2}{5q}.$$ (We note that the bound given in~\cite[Theorem
10.1]{Isaacs-Kantor-Spaltenstein} is $\frac{2}{5}q^{-\delta}$ where
$\delta=1$ unless $S$ is a Suzuki or Ree group, in which case
$\delta=2$. In that paper, the authors use notation $\ta B_2(q^2)$,
$\ta G_2(q^2)$, $\ta F_4(q^2)$ to denote the Suzuki and Ree groups.
Here we think it is more convenient to write $\ta B_2(q)$, $\ta
G_2(q)$, $\ta F_4(q)$.)

If $S$ is a simple classical group different from $\PSL_2(q)$, by
using the lower bound for the centralizer size as in (i), we can
also deduce that $k^\ast_p(S)\rightarrow \infty$ as $|S|\rightarrow
\infty$. The same thing is true for exceptional simple groups as
long as the rank $r$ associated to $S$ is greater than $1$ since
\[\frac{2}{5q}\cdot\frac{q^r}{32A\cdot 6f}\rightarrow \infty \text{ as } q\rightarrow \infty,\]
if $r>1$. Therefore, if $S$ is a simple group of exceptional Lie
type in characteristic $p$ with $S\neq \Sz(2^{f})$ when $p=2$ and
$S\neq \ta G_2(3^f)$ when $p=3$, then
\[k^\ast_p(S)\rightarrow\infty \text{ as } |S|\rightarrow\infty.\]
The proof is complete.
\end{proof}


\section{First part of Theorem~\ref{main theorem for p-singular
classes}}\label{section6}

We are ready to prove the first part of Theorem~\ref{main theorem
for p-singular classes}, which we restate below for the reader's
convenience. The ideas are quite similar to those in
Theorem~\ref{main1}.

\begin{theorem}\label{main21} Let $p$ be a fixed prime, $F$ a positive real number, and $G$ a finite group whose non-abelian
composition factors have order divisible by $p$. Assume that $G$
does not contain $\PSL_2(p^f)$ with $f> F$ as a composition factor.
Assume furthermore that
\begin{enumerate}
\item[(i)] if $p=2$ then the Suzuki groups $\Sz(2^f)$ with $f>F$ are
not composition factors of $G$; and
\item[(ii)] if $p=3$ then the Ree groups $\ta G_2(3^f)$ with $f>F$ are
not composition factors of $G$.
\end{enumerate}
Then $|G/\bO_{\infty}(G)|$ is $(F,k_p(G))$-bounded.
\end{theorem}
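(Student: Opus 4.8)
The plan is to imitate the proof of Theorem~\ref{main1} almost verbatim, substituting Proposition~\ref{proposition 2-singular class} for Proposition~\ref{Out theorem} and invoking the bound $f\leq F$ to handle the three excluded families of simple groups. First, by Lemma~\ref{lemma k2(G/N) <k2(G)} we have $k_p(G/\bO_\infty(G))\leq k_p(G)$, and since $\bO_\infty(G)$ is solvable its composition factors are abelian, so the non-abelian composition factors of $G$ and of $G/\bO_\infty(G)$ coincide. Thus every hypothesis (order divisible by $p$, and the restrictions on $\PSL_2(p^f)$, $\Sz(2^f)$, $\ta G_2(3^f)$) passes to the quotient, and it suffices to assume $\bO_\infty(G)=1$ and bound $|G|$ in terms of $F$ and $k_p(G)$. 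In that case $\Soc(G)$ coincides with the generalized Fitting subgroup and so contains its own centralizer, giving $\Centralizer_G(\Soc(G))\subseteq\Soc(G)$; hence $G$ embeds in $\Aut(\Soc(G))$, and since $|\Aut(\Soc(G))|$ is bounded in terms of $|\Soc(G)|$ it is enough to bound $|\Soc(G)|$. Write $\Soc(G)\cong S_1\times\cdots\times S_t$ with each $S_i$ a non-abelian simple composition factor of $G$, so that $p\mid|S_i|$ by hypothesis.

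Next I would bound the number $t$ of simple factors. By Cauchy's theorem each $S_i$ contains an element $x_i$ of order $p$, which is $p$-singular; here the assumption $p\mid|S_i|$ is precisely what is needed. Setting $g_i=(x_1,\dots,x_i,1,\dots,1)$, each $g_i$ is $p$-singular, and because $G$ permutes the direct factors $S_1,\dots,S_t$ of $\Soc(G)$, the elements $g_1,\dots,g_t$ lie in distinct $G$-classes. Therefore $t\leq k_p(G)$.

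It then remains to bound each $|S_i|$. If $x,y\in S_i$ lie in different $\Aut(S_i)$-classes then $(1,\dots,x,\dots,1)$ and $(1,\dots,y,\dots,1)$ lie in different $G$-classes, whence $k^\ast_p(S_i)\leq k_p(G)$. I split into two cases. If $S_i$ is none of $\PSL_2(p^f)$, $\Sz(2^f)$ (when $p=2$), or $\ta G_2(3^f)$ (when $p=3$), then Proposition~\ref{proposition 2-singular class} gives $k^\ast_p(S_i)\to\infty$ as $|S_i|\to\infty$; combined with $k^\ast_p(S_i)\leq k_p(G)$ this forces $|S_i|$ to be $k_p(G)$-bounded. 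If instead $S_i$ is one of the excluded groups, then, being a composition factor of $G$, its defining-field exponent satisfies $f\leq F$ by hypothesis, so $|S_i|$ is $F$-bounded. In either case $|S_i|$ is $(F,k_p(G))$-bounded. Combining $t\leq k_p(G)$ with these bounds gives $|\Soc(G)|=\prod_{i=1}^t|S_i|$ bounded in terms of $F$ and $k_p(G)$, hence $|G|\leq|\Aut(\Soc(G))|$ is $(F,k_p(G))$-bounded.

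The only genuinely new ingredient relative to Theorem~\ref{main1} is the case analysis in the last step, and this is where I expect the real (though modest) obstacle to lie. The families $\PSL_2(p^f)$, $\Sz(2^f)$, and $\ta G_2(3^f)$ are exactly those for which $k^\ast_p$ fails to grow with the group order, so Proposition~\ref{proposition 2-singular class} says nothing about them and the hypothesis $f\leq F$ must be applied directly to control their size. This is the single point at which the parameter $F$, rather than $k_p(G)$ alone, enters the bound, and it is precisely the reason $F$ cannot be removed from the statement.
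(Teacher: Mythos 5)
Your proposal is correct and follows essentially the same route as the paper: reduce to $\bO_\infty(G)=1$ via Lemma~\ref{lemma k2(G/N) <k2(G)}, bound $|\Soc(G)|$, bound the number $t$ of simple factors by exhibiting $t$ distinct $p$-singular classes, and bound each $|S_i|$ via Proposition~\ref{proposition 2-singular class} together with $f\leq F$ for the excluded families. The only (harmless) deviations are that the paper takes $x_1$ $p$-singular and the remaining $x_i$ arbitrary nontrivial rather than all of order $p$, and that you spell out explicitly the passage of hypotheses to $G/\bO_\infty(G)$ and the $F$-boundedness of the excluded factors, which the paper leaves implicit.
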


\begin{proof} By Lemma~\ref{lemma k2(G/N) <k2(G)}, it suffices to assume that
$\bO_\infty(G)$ is trivial. Recall that $G$ does not contain
$\PSL_2(p^f)$ with $f>F$ as a composition factor. When $p=2$ or
$p=3$, the Suzuki groups $\Sz(2^f)$ or respectively the Ree groups
$\ta G_2(3^f)$ with $f>F$ are not composition factors of $G$. We
want to show that $|G|$ is bounded in terms of $F$ and $k_p(G)$. As
before, it is enough to bound $|\Soc(G)|$. Since $G$ has no
nontrivial solvable normal subgroup, $\Soc(G)$ is isomorphic to a
direct product of non-abelian simple groups, say $\Soc(G)\cong
S_1\times S_2\times \cdots\times S_t.$

First, we will show that $t$ is at most $k_p(G)$. We choose a
$p$-singular element $x_1$ in $S_1$ and arbitrary nontrivial
elements $x_2,x_3,...,x_t$ in $S_2,S_3,...,S_t$, respectively. (This
can be done since every non-abelian composition factor of $G$ has
order divisible by $p$.) Consider the elements
\[g_i=(x_1,x_2,\cdots, x_i,1,\cdots,1)\in \Soc(G)\] where
$i=1,2,...,t$. As
$\ord(g_i)=\lcm(\ord(x_1),\ord(x_2),\cdots,\ord(x_i))$, these $t$
elements of $\Soc(G)$ are $p$-singular. Moreover, the elements
$g_1$, $g_2$,..., $g_t$ belong to different conjugacy classes of
$G$. We conclude that $G$ has at least $t$ classes of $p$-singular
elements so that $t\leq k_p(G)$.

Now it suffices to show that if $S$ is a direct factor of $\Soc(G)$
different from $\PSL_2(p^f)$, and $\Sz(2^f)$ for $p=2$, and $\ta
G_2(3^f)$ for $p=3$, then $|S|$ is bounded in terms of $k_p(G)$.
Assume, to the contrary, that $|S_i|$ can be arbitrarily large.
Then, by Proposition~\ref{proposition 2-singular class}, the number
$k^\ast_p(S_i)$ of $\Aut(S)$-classes of $p$-singular elements in $S$
would be arbitrarily large as well. However, if $x$ and $y$ lie in
different $\Aut(S)$-classes of $S_i$, then $(1,...,1,x,1,...,1)$ and
$(1,...,1,y,1,...,1)$ lie in different $G$-classes, and we therefore
deduce that $k_p(G)$ would be arbitrarily large, as required.
\end{proof}


\section{Proofs of Theorems~\ref{main theorem} and~\ref{main theorem for p-singular
classes}}\label{section-proofs of maintheorems}

We now can complete the proof of Theorems~\ref{main theorem}
and~\ref{main theorem for p-singular classes}. By
Theorems~\ref{main1}, \ref{main21} and Lemma~\ref{normal} we may
assume in this section that $G$ is solvable.

\begin{proof}[Proof of Theorem \ref{main theorem}]
Since $\bO_p(G)=1$, we have that $\bF(G)$ is a $p'$-group. Since
$\bF(G/\Phi(G))=\bF(G)/\Phi(G)$ and $k_{p'}(G/\Phi(G))\leq
k_{p'}(G)$, we may assume that $\Phi(G)=1$. By Gasch\"utz's theorem
(see, for instance, Theorem~1.12 of~\cite{mawo}), $H=G/\bF(G)$ acts
faithfully and completely reducibly on $V=\bF(G)$. Write
$V=V_1\oplus\cdots\oplus V_t$ as a direct sum of irreducible
$H$-modules. Now $H$ is isomorphic to a subgroup of the direct
product of the groups $G/\bC_G(V_i)$ for $i=1,\dots,t$. By
Corollary~\ref{c}, each of these quotient groups is metabelian by
$k$-bounded. Also, since $G$ has $k$ classes of $p$-regular
elements, we have $t\leq k$. Therefore, the direct product of the
groups $G/\bC_G(V_i)$ for $i=1,\dots,t$ is metabelian by
$k$-bounded. We deduce that $H$ is metabelian by $k$-bounded.
\end{proof}

The rest of the proof of Theorem~\ref{main theorem for p-singular
classes} goes along the same lines. In fact, as we already mentioned
in the introduction, we can obtain a bit more.

\begin{theorem}
Let $G$ be a solvable group with $\bO_{p'}(G)=1$. Then there exists
a normal subgroup $N$ containing $\bF(G)$ such that $N/\bF(G)$ is
metabelian and $|G/N|$ is bounded in terms of the number of classes
of $p$-elements of $G$.
\end{theorem}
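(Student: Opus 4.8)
The plan is to adapt the argument used for Theorem~\ref{main theorem}, replacing $p$-regular classes by classes of $p$-elements and the $p'$-group $\bF(G)$ by a $p$-group. Throughout, let $k$ denote the number of conjugacy classes of $p$-elements of $G$; the assertion is equivalent to saying that $H:=G/\bF(G)$ is metabelian by $k$-bounded. The starting observation is that, since $G$ is solvable with $\bO_{p'}(G)=1$, every $\bO_q(G)$ with $q\ne p$ is trivial, so $\bF(G)=\bO_p(G)$ is a $p$-group. Consequently every element of $\bF(G)$ is a $p$-element, which is exactly what makes classes of $p$-elements the relevant invariant.

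First I would reduce to the case $\Phi(G)=1$. Since $\bF(G/\Phi(G))=\bF(G)/\Phi(G)$ and $\Phi(G)\subseteq\bF(G)$, the group $H$ is unchanged upon passing from $G$ to $G/\Phi(G)$, so it suffices to treat $G/\Phi(G)$; for this I need that the number of classes of $p$-elements does not increase under quotients. This is the one point not covered verbatim by Lemma~\ref{lemma k2(G/N) <k2(G)} (which concerns $p$-singular classes): given a $p$-element $gN$ of $G/N$, a Sylow argument applied to $\langle g\rangle N$ produces a genuine $p$-element of $G$ in the coset $gN$, and distinct classes upstairs remain distinct downstairs, so the count does not increase. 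With $\Phi(G)=1$, Gasch\"utz's theorem applies and $H=G/\bF(G)$ acts faithfully and completely reducibly on $V:=\bF(G)$; I then write $V=V_1\oplus\cdots\oplus V_t$ as a sum of irreducible $H$-modules, with each $|V_i|$ a power of $p$.

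The heart of the argument is an orbit count. For each $i$ the quotient $K_i:=G/\bC_G(V_i)$ is a solvable group acting faithfully and irreducibly on $V_i$, and since $V_i$ is a $G$-invariant subset of $V$, the number of $K_i$-orbits (equivalently $G$-orbits) on $V_i$ is at most the number of $G$-orbits on $V$. But $G$-orbits on $V$ are exactly the $G$-classes of elements lying in $V$, and these are all classes of $p$-elements; hence $K_i$ has at most $k$ orbits on $V_i$, and Corollary~\ref{c} shows $K_i$ is metabelian by $k$-bounded. To bound $t$, I would use the chain of $H$-submodules $0=W_0\subsetneq W_1\subsetneq\cdots\subsetneq W_t=V$ with $W_j=V_1\oplus\cdots\oplus V_j$: choosing $u_j\in W_j\setminus W_{j-1}$, the class of $u_j$ lies in $W_j\setminus W_{j-1}$, so these classes together with $\{1\}$ give at least $t+1$ distinct classes of $p$-elements, whence $t\le k$. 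Finally, $H$ embeds in $\prod_{i=1}^t K_i$; a direct product of $t\le k$ groups that are each metabelian by $k$-bounded is again metabelian by $k$-bounded (take the product of the metabelian normal subgroups), and this property passes to subgroups, so $H$ is metabelian by $k$-bounded, as required.

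I expect the only genuinely new ingredient beyond the proof of Theorem~\ref{main theorem} to be the quotient inequality for classes of $p$-elements in the $\Phi(G)=1$ reduction; everything else is the $p$-group analogue of the $p'$-group computation, with Corollary~\ref{c} (that is, Seager's theorem) doing the essential work. The potential pitfall to watch is precisely that a coset which is a $p$-element of $G/N$ need not be represented by a $p$-element of $G$ without the Sylow argument, so one must verify the lifting rather than merely quote the order-divisibility used for $p$-singular classes.
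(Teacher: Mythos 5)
Your proposal is correct and follows essentially the same route as the paper: reduce to $\Phi(G)=1$, apply Gasch\"utz's theorem, decompose $\bF(G)$ into irreducible $H$-modules, bound the number of summands and the number of orbits on each by the number of classes of $p$-elements, and invoke Corollary~\ref{c}. The paper's own proof is just a two-line pointer back to the argument for Theorem~\ref{main theorem}; the details you supply (including the Sylow lifting argument showing that classes of $p$-elements do not increase under quotients, which the paper asserts without proof) are exactly the ones needed.
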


\begin{proof}
Let $k$ be the number of conjugacy classes of $p$-elements of $G$.
Since $\bO_{p'}(G)=1$, the Fitting subgroup $\bF(G)$ is a $p$-group.
Since $\bF(G/\Phi(G))=\bF(G)/\Phi(G)$ and the number of classes of
$p$-elements of $G/\Phi(G)$ is at most the number of classes of
$p$-elements of $G$, we may assume that $\Phi(G)=1$. By Gasch\"utz's
theorem, $H=G/\bF(G)$ acts faithfully and completely reducibly on
$V=\bF(G)$. As in the proof above, we deduce that $H$ is metabelian
by $k$-bounded.
\end{proof}


\section{Proofs of Theorems~\ref{main
theorem-p'-part} and~\ref{main
theorem-p-part}}\label{section-proof-Theorem-p'part}

We will prove Theorems~\ref{main theorem-p'-part} and~\ref{main
theorem-p-part} in this section. Since their proofs are partly
similar, we will skip some details in the proof of Theorem~\ref{main
theorem-p-part}.

\begin{proof}[Proof of Theorem~\ref{main theorem-p'-part}] In
light of Theorem~\ref{main1} and Lemma~\ref{normal} we may assume
that $G$ is solvable. We keep some of the notation in the proof of
Theorem~\ref{main theorem}. As before, we may assume that
$\Phi(G)=1$. Then Gasch\"utz's theorem implies that $H=G/\bF(G)$
acts faithfully and completely reducibly on $V=\bF(G)$. Write
\[V=V_1\oplus\cdots\oplus V_t\] as a direct sum of irreducible
$H$-modules. We have that $H$ is isomorphic to a subgroup of the
direct product of the groups $G/\bC_G(V_i)$ for $i=1,\dots,t$. Since
$G$ has $k$ classes of $p$-regular elements, we have $t\leq k$. We
write $G_i:=G/\bC_G(V_i)$ and notice that it suffices to bound
$|G_i|_{p'}$ for each $i=1,\dots,t$. We note that the number of
orbits of $G_i$ on $V_i$ is at most $k$. Therefore, by
Lemma~\ref{kel}, we may assume that $G_i$ is isomorphic to a
subgroup of the wreath product of $\Gamma(r^a)$ and $\Sy_l$, for
some $l$ that is $k$-bounded. Also, \[V_i=W_{i1}\oplus\cdots\oplus
W_{il},\] where $W_{ij}\cong\GF(r^a)$ for every $1\leq i\leq t$ and
$1\leq j\leq l$.

With a slight abuse of notation, we view $G_i$ as a subgroup of
$\Gamma(r^a)\wr \Sy_l$. In order to bound $|G_i|_{p'}$ it suffices
to bound $|G_i\cap\Gamma(r^a)^l|_{p'}$. We write
\[\Gamma(r^a)^l=\Gamma_1\times\cdots\times\Gamma_l.\] For each $1\leq
j\leq l$, we have $\Gamma_j=H_jB_j$ where $H_j$ is the Galois group
$\Gal(\GF(r^a)/\GF(r))$ and $B_j=\GF(r^a)^\ast$ and that $\Gamma_j$
acts on $W_{ij}$. We consider the homomorphisms \[\alpha_j:
G_i\cap\Gamma(r^a)^l\longrightarrow \Gamma_j\] defined by
\[\alpha_j(x_1,\dots,x_l)=x_j.\] In order to bound
$|G_i\cap\Gamma(r^a)^l|_{p'}$ it suffices to bound
$|\alpha_j(G_i\cap\Gamma(r^a)^l)|_{p'}$ for each $1\leq j\leq l$.

\medskip

\textbf{Claim 1}: $|\alpha_j(G_i\cap\Gamma(r^a)^l):\alpha_j(G_i\cap
B_1\times\cdots\times B_l)|_{p'}$ is $k$-bounded.

\medskip

Proof: Since $G_i$ is a quotient of $G$ and $k=k_{p'}(G)$, it
follows that $G_i$ has at most $k$ $p$-regular classes. The index of
$G_i \cap \Gamma(r^a)^l$ in $G_i$ is $k$-bounded, so the number of
$p$-regular classes of $G_i \cap \Gamma(r^a)^l$ is $k$-bounded.
Therefore, the number of $p$-regular classes of $\alpha_j(G_i \cap
\Gamma(r^a)^l)$ is $k$-bounded.

Now, we have that \[(G_i \cap \Gamma(r^a)^l)/(G_i\cap
B_1\times\cdots\times B_l)\] is abelian and since \[\alpha_j (G_i
\cap \Gamma(r^a)^l)/\alpha_j (G_i\cap B_1\times\cdots\times B_l)\]
is isomorphic to a quotient of this group, it is also abelian. By
the previous paragraph, the $p'$-part of its order is $k$-bounded
and the claim is proved.

\medskip

\textbf{Claim 2}: $|\alpha_j(G_i\cap B_1\times\cdots\times
B_l)|_{p'}/a$ is $k$-bounded.

\medskip

Proof: We observe that
$|\alpha_j(G_i\cap\Gamma(r^a)^l):\alpha_j(G_i\cap
B_1\times\cdots\times B_l)|$ divides $a$. Also, the group
$\alpha_j(G_i\cap B_1\times\cdots\times B_l)$ is abelian. Therefore,
\begin{align*}|\alpha_j(G_i\cap B_1\times\cdots\times B_l)|_{p'}&=k_{p'}(\alpha_j(G_i\cap B_1\times\cdots\times
B_l))\\
&\leq
k_{p'}(\alpha_j(G_i\cap\Gamma(r^a)^l))\left|\frac{\alpha_j(G_i\cap\Gamma(r^a)^l)}{\alpha_j(G_i\cap
B_1\times\cdots\times B_l)}\right|\\
&\leq ak_{p'}(\alpha_j(G_i\cap\Gamma(r^a)^l)).
\end{align*}
As argued above, the number of $p$-regular classes of $\alpha_j(G_i
\cap \Gamma(r^a)^l)$ is $k$-bounded and thus the claim follows.

Recall that the number of orbits of $G_i$ on $V_i$ is at most $k$.
Therefore the number of orbits of $G_i\cap\Gamma(r^a)^l$ on $V_i$ is
at most $k|G_i:G_i\cap\Gamma(r^a)^l|$, which is a $k$-bounded
quantity. We deduce that the number of orbits of
$\alpha_j(G_i\cap\Gamma(r^a)^l)$ on $W_{ij}$ is $k$-bounded for
every $j$. Therefore, \[(r^a-1)/|\alpha_j(G_i\cap\Gamma(r^a)^l)|
\text{ is } k\text{-bounded}.\] In other words, we have
\[\frac{(r^a-1)_{p'}}{|\alpha_j(G_i\cap\Gamma(r^a)^l)|_{p'}}\cdot \frac{(r^a-1)_{p}}{|\alpha_j(G_i\cap\Gamma(r^a)^l)|_{p}} \text{ is } k\text{-bounded}.\]
Since $\alpha_j(G_i\cap\Gamma(r^a)^l)$ is a subgroup of $\Gamma_j$
whose order is $a(r^a-1)$, it follows that
\[|\alpha_j(G_i\cap\Gamma(r^a)^l)|_{p}\leq a_p(r^a-1)_p.\] We
therefore find that
\[\frac{(r^a-1)_{p'}}{a_p\cdot|\alpha_j(G_i\cap\Gamma(r^a)^l)|_{p'}} \text{ is } k\text{-bounded}.\]
Combining this with the claims above, we have
\[\frac{(r^a-1)_{p'}}{a\cdot a_p} \text{ is } k\text{-bounded}.\]

We now apply Lemma~\ref{lemma-numbertheory} to deduce that $a$ is
bounded in terms of $k$. For each $1\leq j\leq l$, as
$|\alpha_j(G_i\cap\Gamma(r^a)^l)|_{p'}/a$ is $k$-bounded, we
conclude that $|\alpha_j(G_i\cap\Gamma(r^a)^l)|_{p'}$ is also
bounded in terms of $k$. The proof is complete.
\end{proof}


\begin{proof}[Proof of Theorem~\ref{main theorem-p-part}] By Theorem~\ref{main21} and Lemma~\ref{normal} we may assume
that $G$ is solvable. Let $k:=k_p(G)$ be the number of $p$-singular
classes of $G$. As before, we may assume that $\Phi(G)=1$. Then
Gasch\"utz's theorem implies that $G/\bF(G)$ acts faithfully and
completely reducibly on $\bF(G)$, which is a $p$-group since
$\bO_{p'}(G)=1$.

We now follow the notation in the proof of Theorem~\ref{main
theorem-p'-part}. In particular, we define $V_i$, $G_i$, $W_{ij}$,
$\alpha_j$, and $\Gamma_j=H_jB_j$ with $1\leq i\leq t$ and $1\leq
j\leq l$ as before. In order to bound $|G/\bF(G)|_{p}$, it suffices
to bound $|\alpha_j(G_i\cap\Gamma(r^a)^l)|_{p}$ for each $i$ and
$j$.

As in the proof of Theorem~\ref{main theorem-p'-part}, we can argue
that \[k_p(\alpha_j(G_i\cap\Gamma(r^a)^l))\] and
\[|\alpha_j(G_i\cap\Gamma(r^a)^l):\alpha_j(G_i\cap
B_1\times\cdots\times B_l)|_{p}\] are both $k$-bounded. Furthermore,
since \[|\alpha_j(G_i\cap\Gamma(r^a)^l):\alpha_j(G_i\cap
B_1\times\cdots\times B_l)|\] divides $a$, we have
\[k_p(\alpha_j(G_i\cap
B_1\times\cdots\times B_l))\leq a
k_p(\alpha_j(G_i\cap\Gamma(r^a)^l)).\] Since the group
$\alpha_j(G_i\cap B_1\times\cdots\times B_l)$ is abelian, we see that
\[k_p(\alpha_j(G_i\cap B_1\times\cdots\times B_l))=|\alpha_j(G_i\cap B_1\times\cdots\times B_l)|_{p'}(|\alpha_j(G_i\cap B_1\times\cdots\times
B_l)|_p-1).\] As $k_p(\alpha_j(G_i\cap\Gamma(r^a)^l))$ is
$k$-bounded, it follows that
\[\frac{1}{a}|\alpha_j(G_i\cap B_1\times\cdots\times B_l)|_{p'}(|\alpha_j(G_i\cap B_1\times\cdots\times
B_l)|_p-1) \text{ is } k\text{-bounded}.\]

If $|\alpha_j(G_i\cap B_1\times\cdot\times B_l)|_p=1$ then
\[|\alpha_j(G_i\cap\Gamma(r^a)^l)|_p=|\alpha_j(G_i\cap\Gamma(r^a)^l):\alpha_j(G_i\cap
B_1\times\cdots\times B_l)|_{p}\] is $k$-bounded and we are done. So
we may assume that \[|\alpha_j(G_i\cap B_1\times\cdot\times
B_l)|_p\geq 2\] so that \[|\alpha_j(G_i\cap B_1\times\cdots\times
B_l)|_p-1\geq \frac{1}{2}|\alpha_j(G_i\cap B_1\times\cdots\times
B_l)|_p.\] We then deduce that
\[\frac{|\alpha_j(G_i\cap B_1\times\cdots\times B_l)|}{a} \text{ is } k\text{-bounded}.\]

Similar to the proof of Theorem~\ref{main theorem-p'-part}, we know
that \[(r^a-1)/|\alpha_j(G_i\cap\Gamma(r^a)^l)| \text{ is }
k\text{-bounded}.\] Using the conclusion of the previous paragraph
and the fact that \[|\alpha_j(G_i\cap\Gamma(r^a)^l):\alpha_j(G_i\cap
B_1\times\cdots\times B_l)| \text{ divides } a,\] we deduce that
\[\frac{r^a-1}{a^2} \text{ is } k\text{-bounded}.\]
Therefore, the integer $a$ must be $k$-bounded, which in turn
implies that \[|\alpha_j(G_i\cap B_1\times\cdots\times B_l)|_p
\text{ is } k\text{-bounded}.\] As mentioned above that
\[|\alpha_j(G_i\cap\Gamma(r^a)^l):\alpha_j(G_i\cap
B_1\times\cdots\times B_l)|_{p} \text{ is } k\text{-bounded},\] we
conclude that \[|\alpha_j(G_i\cap\Gamma(r^a)^l)|_{p} \text{ is }
k\text{-bounded},\] as required.
\end{proof}


\section{Examples and open questions}\label{section-example}

There does not seem to be much room for improvement in Theorem
\ref{main theorem}. For instance, let $V_{l,q}=\GF(q^{2^l})$ where
$q$ is any Fermat prime and $l$ any positive integer, and
$H_{l,q}=C_{q-1}\wr P$, where $P$ is a Sylow $2$-subgroup of
$\Sy_{2^l}$. Take $G_{l,q}:=H_{l,q}V_{l,q}$. We see that the number
of $2$-regular classes of $G$ is $2^l+1$ (and a complete set of
representatives of these classes is $(1,\dots,1), (a,1,\dots,1),
(a,a,1,\dots,1),\dots,(a,\dots,a)$). If we fix $l$ then all the
groups $G_{l,q}$ have the same number of $2$-regular classes. It
turns out that the order of the groups $G_{l,q}$ is bounded if and
only if there are finitely many Fermat primes. On the other hand, if
we fix $q$ and take $l$ large enough then $G_{l,q}$ has arbitrarily
large derived length. Therefore, there is no hope to prove that  if
$G$ is any finite group with $\bO_p(G)=1$ and $k=k_{p'}(G)$ then
either $G$ is metabelian or $|G/\bF(G)|$ is $k$-bounded, for
instance (unless we prove that there are finitely many Fermat primes
first).

We have already seen that if we want to bound the order of a group
$G$ with $\bO_p(G)=1$ in terms of the number of $p$-regular classes
we need to bound the number of Fermat primes. In fact, looking at
the classification of groups with few $p$-regular classes we see the
following.

\begin{theorem} We have the following.
\begin{enumerate}
\item[(i)]
The cardinality of the set of groups with two $p$-regular classes
and no nontrivial normal $p$-group (for some prime $p$) is bounded
if and only if there are finitely many Fermat and Mersenne primes.
\item[(ii)]
The cardinality of the set of groups with three $p$-regular classes
and no nontrivial normal $p$-group (for some prime $p$) is bounded
if and only if there are finitely many Fermat primes, finitely many
primes of the form $2r^n+1$ where $r$ is prime, and finitely many
$3$-powers of the form $2r^n+1$ where $r$ is prime.
\item[(iii)]
The cardinality of the set of groups with four $p$-regular classes
and no nontrivial normal $p$-group for some odd prime $p$ is bounded
if and only if there are finitely many Mersenne primes and finitely
many prime powers of the form $4r^n+1$, where $r$ is prime.
\end{enumerate}
\end{theorem}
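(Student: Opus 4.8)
The plan is to analyze the structure of a group $G$ with $\bO_p(G)=1$ and exactly $k$ $p$-regular classes, for small values of $k$, and to match these groups against the number-theoretic characterizations in each case. For each value $k\in\{2,3,4\}$ I would invoke the existing classifications cited in the introduction: Ninomiya--Wada~\cite{nw} for $k=2$, Ninomiya~\cite{n3,n1,n2} for $k=3$, and Tiedt~\cite{tie} together with our Theorem~\ref{B} for $k=4$. The key observation is that each such classification describes $G$ as an extension of a normal abelian (indeed elementary abelian, by $\bO_p(G)=1$ and Theorem~\ref{main theorem}) $p'$-group $V$ by a complement $H$ acting on $V$, where the constraint of having only $k$ $p$-regular classes forces $H$ to act on $V$ with very few orbits. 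For $k=2$ there are two orbits, for $k=3$ three orbits, and so on, and the arithmetic of Frobenius-type actions translates directly into the primality conditions stated.

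First I would establish the reverse direction in each case: given infinitely many primes of the stated form, I exhibit an infinite family of pairwise non-isomorphic groups. The prototype is the example of the introduction, $F_n=C_q\rtimes C_{q-1}$ with $q$ a Fermat prime, which has $k_{2'}(F_n)=2$ and unbounded order. For $k=2$, taking $q$ a Fermat prime gives the group $C_q\rtimes C_{q-1}$ with two $2$-regular classes, while taking $q=2^n-1$ a Mersenne prime and working over $\GF(2)$ produces an analogous Frobenius group $C_q\rtimes C_?$ realizing the other half of (i). For (ii) and (iii) the families are built from primes or prime powers of the indicated special shapes $2r^n+1$, $3$-powers of that form, and $4r^n+1$; in each case the exponent structure of the multiplicative group $\GF(r^n)^\ast$ (acting semilinearly, so that one must track $(q-1)$ together with the field-automorphism part) dictates exactly how many $p$-regular orbits arise.

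The forward direction is the substantive one: assuming the set of such groups is bounded, I must show the corresponding prime sets are finite. Here I would argue contrapositively, using the explicit structure of the classified groups. The point is that in each classification the group $G$ is, up to a $k$-bounded piece, a semilinear group $\Gamma(r^a)$ (or a small wreath extension, as in Lemma~\ref{kel} and Corollary~\ref{c}) acting on $V=\GF(r^a)$ with few orbits; counting $p$-regular classes amounts to controlling $(r^a-1)_{p'}$ against the index of the acting subgroup, exactly the quantity governed by Lemma~\ref{lemma-numbertheory}. Translating "$G$ has precisely $k$ $p$-regular classes" into a Diophantine condition on $r$, $a$, and the relevant prime $p$ produces the equations defining Fermat primes, Mersenne primes, primes $2r^n+1$, and prime powers $4r^n+1$. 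The main obstacle I anticipate is the bookkeeping in case (ii) and (iii): one must carefully separate the contribution of the toral (diagonal) part $\GF(r^a)^\ast$ from the Galois part $\Gal(\GF(r^a)/\GF(r))$, as in the two-claim analysis of the proof of Theorem~\ref{main theorem-p'-part}, and verify that each distinct shape of prime corresponds to a genuinely distinct orbit-counting regime rather than being absorbed into the others. Matching every listed prime form to exactly one branch of the classification, with no overlaps and no omissions, is where the delicate case-checking lies.
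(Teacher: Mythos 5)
Your approach is essentially the paper's: the published proof consists entirely of citing the classifications of Ninomiya--Wada (\cite{nw}, $k=2$), Ninomiya (\cite{n3,n1,n2}, $k=3$) and Tiedt (\cite{tie}, $k=4$, with a correction to the third family in his Main Theorem) and reading off which infinite families correspond to which prime forms, exactly as in your first paragraph. The additional machinery you invoke for the forward direction (Seager's theorem, the two-claim analysis of Theorem~\ref{main theorem-p'-part}) is unnecessary once the explicit lists are in hand, and your Mersenne-prime family in (i) should be the Frobenius group $(C_2)^n\rtimes C_q$ with $q=2^n-1$ viewed at the prime $p=q$ rather than a group of the form $C_q\rtimes C_?$, but the substance matches.
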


\begin{proof}
Part (i) follows from the classification of groups with
$k_{p'}(G)=2$ in~\cite{nw}. Part~(ii) follows from the
classification of groups with $k_{p'}(G)=3$ in~\cite{n3,n1,n2}.
Part~(iii) follows from~\cite{tie}, noting that in the third family
of groups that appear in the statement of Main Theorem it should say
that $p=2^n-1$ is a Mersenne prime.
\end{proof}

We remark that, in fact, using the groups that appear in Ninomiya's
and Tiedt's theorems it is easy to build up groups similar to the
groups $G_{l,q}$ in the first paragraph of this section for odd
primes.

In view of these results and the proof of Theorem~\ref{main theorem}
we suspect that it should be possible to prove that the cardinality
of the set of groups $G$ with $k$ $p$-regular classes and
$\bO_p(G)=1$ is $k$-bounded if and only if there are finitely many
primes of certain forms. Our proof basically shows that it suffices
to study actions of affine linear groups on modules and see what
happens when there are few orbits.

As mentioned in the introduction, Landau's theorem was strengthened
by Craven and Moret\'{o} by showing that the order of a finite group
$G$ is bounded in terms of the largest multiplicity of irreducible
character degrees of $G$. We think that Theorem~\ref{main theorem}
can also be strengthened in this direction.

\begin{question} Let $p$ be a fixed prime and $G$ a finite group
with $\bO_p(G)=1$. Let $m$ be the largest multiplicity of $p$-Brauer
character degrees or the largest multiplicity of $p$-regular class
sizes of $G$. Is it true that $|G/\bO_\infty(G)|$ is $m$-bounded and
$\bO_\infty(G)/\bF(G)$ is metabelian by $m$-bounded?
\end{question}

Another direction to generalise Theorems~\ref{main theorem}
and~\ref{main theorem-p'-part} is to go from a single prime $p$ to a
set of primes $\pi$.

\begin{question}\label{question} Let $\pi$ be a fixed finite set of primes of cardinality at most $2$ and $G$ a finite group. Is it true that $|G/\bO_\infty(G)|$ is
bounded in terms of the number of conjugacy classes of $\pi$-regular
elements of $G$?
\end{question}

\noindent This question is related to a recent study
\cite{Maroti-Ng} on the number of conjugacy classes of $\pi$-regular
elements. It has been shown there that this number somehow controls
the $\pi$-local structure of the group. The answer for
Question~\ref{question} is negative if the cardinality of $\pi$ is
$3$ or higher. For instance, a direct product of any number of
copies of $\Al_5$ has no nontrivial $\{2,3,5\}$-regular classes but
its order can be arbitrarily large. However, as long as the prime
$2$ is not in $\pi$, we believe that the same conclusion still
holds.

\begin{question} Let $\pi$ be a fixed finite set of primes such that $2\notin \pi$ and $G$ a finite group. Is it true that $|G/\bO_\infty(G)|$ is
bounded in terms of the number of conjugacy classes of $\pi$-regular
elements of $G$?
\end{question}


\section{Groups with four $p$-regular classes - Theorem \ref{B}}\label{section-proof-Theorem-B}

It would be interesting to classify the nonsolvable groups with
$\bO_p(G)$=1 and at most five $p$-regular classes. The reason for
this is that we expect the number of these groups to be finite.
However, even if the number of nonsolvable groups with six
$p$-regular classes and $\bO_p(G)=1$ were finite, proving this would
be out of reach. The reason is that, for instance, the direct
products $\Sy_5\times F_n$, where the $F_n$ are the Frobenius groups
defined in the introduction, have six $2$-regular classes.

We now start working toward a proof of Theorem~\ref{B}. We will use
Ninomiya's classification of nonsolvable groups with three
$p$-regular classes.

\begin{lemma}[Ninomiya \cite{n3}]\label{list of nonsolvable groups with kp(G)=3} Let $G$
be a finite non-solvable group with $\bO_p(G)=1$. Then $k_{p'}(G)=3$
if and only if one of the following holds:
\begin{enumerate}
\item[(i)] $p=2$ and $G\cong \Sy_5$, $\PSL_2(7)\cdot 2$, $\Al_6\cdot
2_3$, or $\Al_6\cdot 2^2$.
\item[(ii)] $p=3$ and $G\cong \PSL_2(8)\cdot 3$.
\item[(iii)] $p=5$ and $G\cong \Al_5$.
\end{enumerate}
\end{lemma}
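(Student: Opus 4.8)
The statement is Ninomiya's classification \cite{n3}; the plan is to recover it with the machinery of this paper. The \emph{if} direction is a finite verification: for each group $G$ in the list one reads off the $p$-regular classes from its ordinary character table (equivalently, its conjugacy classes of $p'$-elements, using the Atlas) and checks that there are exactly three. For instance, in $\Sy_5$ with $p=2$ the odd-order classes are the identity, the $3$-cycles and the $5$-cycles; in $\PGL_2(7)=\PSL_2(7)\cdot 2$ with $p=2$ the two $\PSL_2(7)$-classes of order $7$ fuse, leaving the identity, order $3$ and order $7$; and in $\Al_5$ with $p=5$ one has the identity, order $2$ and order $3$. So the content lies in the \emph{only if} direction, which I would split into a structural reduction to almost simple groups followed by a bounded enumeration.

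\textbf{Reduction to almost simple groups.} Put $R=\bO_\infty(G)$ and $\bar G=G/R$. Then $\bar G$ is non-solvable, $\bO_p(\bar G)=1$ (its preimage would be solvable, hence inside $R$), and $R$ is the solvable radical, so $\bar G$ has trivial solvable radical. I would first show $k_{p'}(\bar G)=3$: the inequality $k_{p'}(\bar G)\le k_{p'}(G)=3$ is Lemma~\ref{lemma k2(G/N) <k2(G)}, while for the reverse a simple direct factor $S_1$ of $\Soc(\bar G)$ is non-abelian simple, so $|S_1|$ has at least two prime divisors different from $p$; taking $p$-regular elements of two such prime orders in $S_1$, together with the identity, gives three distinct $p$-regular classes. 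Next, writing $\Soc(\bar G)\cong S_1\times\cdots\times S_t$, I would rule out $t\ge 2$ by a support-size argument: choosing $p$-regular $x_1,x_1'\in S_1$ of distinct orders and a nontrivial $p$-regular $x_2\in S_2$, the elements $1$, $(x_1,1,\dots)$, $(x_1',1,\dots)$ and $(x_1,x_2,1,\dots)$ are pairwise non-conjugate in $\bar G$ (they have different orders, or different numbers of nontrivial coordinates, which is an $\Aut(\Soc(\bar G))$-invariant), giving $k_{p'}(\bar G)\ge 4$. Hence $t=1$ and $\bar G$ is almost simple. Finally I would show $R=1$: the class map $[g]\mapsto[gR]$ from $p$-regular classes of $G$ to those of $\bar G$ is surjective, since the $p'$-part of any lift of a $p$-regular element is itself a $p$-regular lift, and therefore it is a bijection as both sets have size three. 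But if $R\neq 1$ then $\bF(G)=\bF(R)$ is a nontrivial $p'$-group, and any $1\neq x\in\bF(G)$ gives a nontrivial $p$-regular class of $G$ that collapses to the identity of $\bar G$, contradicting injectivity. Thus $\bO_\infty(G)=1$ and $G=\bar G$ is almost simple with socle $S$.

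\textbf{Bounding and enumeration.} Since $S\trianglelefteq G\le\Aut(S)$, every $\Aut(S)$-class of $p$-regular elements of $S$ splits into $G$-classes, so $3=k_{p'}(G)\ge k^\ast_{p'}(S)$. For $S$ of Lie type, Proposition~\ref{Out theorem} forces $|S|$ to be bounded; for $S=\Al_n$ the prime-counting argument from the proof of Theorem~\ref{main1} bounds $n$; and there are only finitely many sporadic groups. Thus $S$ ranges over an explicit finite list, and for each such $S$, each prime $p$ (necessarily dividing $|S|$, as otherwise $k_{p'}(G)=k(G)>3$) and each intermediate group $S\le G\le\Aut(S)$, one computes $k_{p'}(G)$ from the character table and retains the cases equal to $3$; this yields exactly (i)--(iii).

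\textbf{Main obstacle.} The structural reduction is short, but the final enumeration is the substantive part: it is precisely Ninomiya's careful, group-by-group and prime-by-prime determination of which almost simple groups have exactly three $p$-regular classes. Making the bound on $|S|$ explicit enough to reduce to a humanly checkable list, and then carrying out the fusion and class-number computations in each almost simple extension, is where essentially all of the work lies.
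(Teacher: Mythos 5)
First, a point of orientation: the paper does not prove this lemma at all --- it is quoted verbatim as Ninomiya's classification \cite{n3} (hence the bracketed attribution), so the only internal benchmark is the paper's proof of the analogous Theorem~\ref{B} in Section~\ref{section-proof-Theorem-B}. Measured against that, your structural reduction is correct and close in spirit to Lemma~\ref{lemma G is almost simple}: the support-size argument killing $t\geq 2$ is sound, and your additional step proving $\bO_\infty(G)=1$ (surjectivity of the induced map on $p$-regular classes via $p'$-parts, then injectivity failing on a nontrivial element of the $p'$-group $\bF(G)$) is both correct and genuinely necessary here, since Ninomiya's hypothesis is only $\bO_p(G)=1$ while Lemma~\ref{lemma G is almost simple} assumes trivial solvable radical. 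That part of your proposal goes beyond what the paper's own machinery states.

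The gap is in the enumeration. Proposition~\ref{Out theorem} is purely asymptotic: it proves that only finitely many socles $S$ can occur, but it does not produce the ``explicit finite list'' your final step requires, so the concluding claim that the check ``yields exactly (i)--(iii)'' is not something your argument can deliver. Even substituting the explicit constants of Lemmas~\ref{lemma for linear and unitary groups}--\ref{lemma for exceptional groups} (the exceptional-group bound involves an unspecified absolute constant $A$ from Fulman--Guralnick), the resulting bounds are astronomically large; the paper itself notes that this qualitative route gives bounds like $|G|\leq (11!^4/16)!$ when $k_{p'}(G)=4$. The missing idea --- exactly the one the paper uses to prove Theorem~\ref{B} --- is prime-divisor counting: every prime $q\neq p$ dividing $|S|$ contributes at least one $G$-class of nontrivial $q$-elements, all of which are $p$-regular, so $k_{p'}(G)=3$ forces $|S|$ to have at most two prime divisors other than $p$; since a non-abelian simple group has at least three prime divisors, $|S|$ must be a $K_3$-group whose order is divisible by $p$, and Herzog's classification \cite{Herzog} gives precisely eight candidates: $\Al_5$, $\PSL_2(7)$, $\Al_6$, $\PSL_2(8)$, $\PSL_2(17)$, $\PSL_3(3)$, $\PSU_3(3)$, $\PSU_4(2)$. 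The Atlas check over the almost simple groups with these socles and the relevant primes is then a small, genuinely finite computation. (Note that the $k=3$ case is even easier than the paper's $k=4$ case, which additionally needs Bugeaud--Cao--Mignotte and centralizer estimates to handle $K_4$-socles; here none of that is required.) Replacing your Proposition~\ref{Out theorem} step with this counting argument turns your sketch into a complete and feasible proof.
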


In the next two results we also consider groups with five
$p$-regular classes. This is not necessary for the proof of
Theorem~\ref{B}, but we have decided to include it because it does
not make the proofs much longer and it would be helpful for the
classification on nonsolvable groups with five $p$-regular classes
and trivial solvable radical.

\begin{lemma}\label{simple groups with 4 prime divisors} Let $S$ be
a simple group whose order has exactly four distinct prime divisors.
Let $G$ be an almost simple group with socle $S$. Then

\begin{enumerate}
\item[(i)] $G$ has exactly four $p$-regular classes if and only if $p=2$
and $G\cong \PSL_3(4)\cdot 2_1$, $\PSL_3(4)\cdot 2^2$, or $\ta
F_4(2)'\cdot 2$.

\item[(ii)] $G$ has exactly five $p$-regular classes if and only if $p=2$
and $G\cong \Sy_7$, $M_{11}$, $M_{12}\cdot 2$, $\PSL_3(4)\cdot 2_2$,
$\PSL_3(4)\cdot 2_3$, $\ta F_4(2)'$, $\PSL_2(11)\cdot 2$.
\end{enumerate}
\end{lemma}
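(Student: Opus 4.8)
The plan is to reduce the problem to a finite, explicit case-check over the simple groups $S$ whose order has exactly four distinct prime divisors, and then to climb from $S$ to each almost simple group $G$ with socle $S$. The crucial point is that having exactly four prime divisors is already a very stringent restriction: by a classical result (the classification of simple groups whose order is divisible by only a few primes, going back to work of Herzog and others using CFSG) there are only finitely many such simple groups $S$. So the first step I would take is to invoke that classification to produce the complete finite list of candidates for $S$. This turns an a priori infinite problem into a bounded one.

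Once $S$ ranges over a known finite list, the second step is, for each such $S$, to enumerate the almost simple groups $G$ with $\Soc(G)=S$; these are the subgroups of $\Aut(S)$ containing $\Inn(S)\cong S$, so they correspond to subgroups of $\Out(S)$, and $\Out(S)$ is small and explicitly known in every case. For each such $G$ and each relevant prime $p$ I would then compute $k_{p'}(G)$, the number of $p$-regular classes, equivalently the number of irreducible $p$-Brauer characters. The key computational input is that $k_{p'}(G)$ equals the number of conjugacy classes of $p$-regular elements of $G$, which can be read off from the (known) conjugacy class data and the modular character tables in the Atlas and in GAP. Here the prime $p$ matters: since $|S|$ has exactly four prime divisors, for $k_{p'}(G)$ to be as small as $4$ or $5$ the prime $p$ must be chosen so that removing the $p$-singular classes cuts the total class number down drastically, which already forces $p=2$ in almost every case (the denser $2$-singular structure makes $2$ the efficient choice), matching the shape of the asserted answer.

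A useful bookkeeping device throughout is Lemma~\ref{lemma k2(G/N) <k2(G)}, giving $k_{p'}(G/N)\le k_{p'}(G)$, together with the lower bounds of Lemmas~\ref{lemma for linear and unitary groups}, \ref{lemma for symplectic and orthogoanl groups}, and \ref{lemma for exceptional groups}: these let me discard large members of any family immediately, since a small value of $k_{p'}$ forces $|S|$ below an explicit threshold. Combined with the finiteness of the four-prime list, this confines attention to genuinely small groups such as $\PSL_3(4)$, $\ta F_4(2)'$, $\Sy_7$, $M_{11}$, $M_{12}$, and $\PSL_2(11)$, whose modular data I would then verify one prime at a time. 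For each surviving candidate the verification that $k_{p'}(G)$ equals exactly $4$ (part (i)) or exactly $5$ (part (ii)) is a direct count, and the distinctions between the various extensions (for instance the three versions $\PSL_3(4)\cdot 2_1$, $\PSL_3(4)\cdot 2_2$, $\PSL_3(4)\cdot 2_3$ and $\PSL_3(4)\cdot 2^2$) hinge on exactly how the outer automorphism fuses the $2$-regular classes of $S$.

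The main obstacle I expect is precisely this last point: controlling the fusion of $p$-regular classes of $S$ under the various outer automorphisms, because the different isoclinic-looking extensions of the same socle can give genuinely different values of $k_{p'}$, and the $\PSL_3(4)$ case in particular has an unusually large and intricate outer automorphism group $\Out(\PSL_3(4))\cong 2\times \Sy_3$. Getting the three labeled extensions $\cdot 2_1$, $\cdot 2_2$, $\cdot 2_3$ sorted correctly, and confirming that exactly $\cdot 2_1$ and $\cdot 2^2$ (plus $\ta F_4(2)'\cdot 2$) land in part (i) while $\cdot 2_2$, $\cdot 2_3$, $\ta F_4(2)'$ land in part (ii), requires careful reading of the Atlas class-fusion information rather than any single clean argument. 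Everything else is a finite, if tedious, character-theoretic census.
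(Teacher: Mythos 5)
Your opening reduction rests on a false premise. There is no known classification of the simple groups whose order has exactly four distinct prime divisors, and it is in fact an open problem whether that set is finite; Herzog's classification covers only the three-prime case. The paper says this explicitly at the start of its proof of Lemma~\ref{simple groups with 4 prime divisors}. What is available, and what the paper actually uses, is the theorem of Bugeaud, Cao, and Mignotte~\cite{Bugeaud-Cao-Mignotte}: a simple $K_4$-group either lies on an explicit finite list of small groups or is $\PSL_2(q)$ with $q(q^2-1)=\gcd(2,q-1)2^{\alpha_1}3^{\alpha_2}r^{\alpha_3}s^{\alpha_4}$ for primes $3<r<s$. The finite list is checked against the Atlas; the infinite $\PSL_2(q)$ branch is then killed by a quantitative argument, not a classification: the Fulman--Guralnick centralizer bound gives $|\bC_{\PSL_2(q)}(x)|\geq q/(e(1+\log_q3)(q-1,2))$, the Babai--Guest--Praeger--Wilson bound gives proportion at least $1/4$ of $p$-regular elements in $\PSL_2(q)$, and together with $|\Out(\PSL_2(q))|\leq (2,q-1)f$ these force $5\geq k_{p'}(G)\geq q/(4ef(1+\log_q3)(2,q-1)^2)$, which combined with the partial solution of the Diophantine equation leaves only seventeen explicit values of $q$ to check by computer.

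Your secondary device --- using the lower bounds of Lemmas~\ref{lemma for linear and unitary groups}--\ref{lemma for exceptional groups} to discard large members of each Lie-type family --- is the right instinct and is essentially how the paper closes the $\PSL_2(q)$ gap, but as written you treat it as optional bookkeeping layered on top of a ``complete finite list'' that does not exist. To repair the proposal you must either cite the Bugeaud--Cao--Mignotte reduction or carry out the explicit lower-bound argument for \emph{every} family of simple groups of Lie type (and the alternating and sporadic groups, which genuinely are finite in number here), with constants sharp enough to make the residual case check feasible. The rest of your plan --- enumerating subgroups of $\Out(S)$, computing class fusion for the $\PSL_3(4)$ extensions, and counting $p$-regular classes from Atlas and GAP data --- matches the paper's endgame.
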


\begin{proof} It is not known whether the collection of finite simple groups whose order has exactly four prime
divisors is finite or not. However, Y.~Bugeaud, Z.~Cao, and
M.~Mignotte showed in~\cite{Bugeaud-Cao-Mignotte} that if $S$ is
such a group, then $S\cong \PSL_2(q)$ where $q$ is a prime power
satisfying
\begin{equation}\label{equation 4 prime divisors}q(q^2-1)=\gcd(2,q-1)2^{\alpha_1}3^{\alpha_2}r^{\alpha_3}s^{\alpha_4}\end{equation}
with $r$ and $s$ prime numbers such that $3<r<s$, or $S$ belongs to
a finite list of `small' simple groups,
see~\cite[Theorem~1]{Bugeaud-Cao-Mignotte} for the list.
Using~\cite{Atl1,Atl2}, it is routine to check the list and find all
the groups with exactly four or five $p$-regular classes.

So we are left with the case $S\cong \PSL_2(q)$ mentioned above.
By~\cite[Theorem~6.4]{Fulman-Guralnick}, the centralizer size of an
element of $\GL_2(q)$ can be estimated by
\[|\Centralizer_{\GL_2(q)}(g)|\geq \frac{q(q-1)}{e(1+\log_q3)}.\] Therefore, the centralizer size of an element of $\PSL_2(q)$ is at least
\[\frac{q(q-1)}{e(1+\log_q3)(q-1)(q-1,2)}=\frac{q}{e(1+\log_q3)(q-1,2)}.\]
On the other hand, by~\cite[Theorem~1.1]{Babai-Praeger-Wilson}, the
proportion of $p$-regular elements of $\PSL_2(q)$ is greater than or
equal to $1/4$. In particular, the number of $p$-regular elements in
$\PSL_2(q)$ is at least $({1}/{4})|\PSL_n(q)|$ and it follows that
\[k_{p'}(\PSL_2(q))\geq\frac{q}{4e(1+\log_q3)(2,q-1)},\] which in turns implies
\[5\geq k_{p'}(G)\geq \frac{q}{4ef(1+\log_q3)(2,q-1)^2},\] where
$q=\ell^f$ for a prime $\ell$. This inequality together with the
partial solution of Equation~\ref{equation 4 prime divisors}
in~\cite{Bugeaud-Cao-Mignotte} yield
\[q\in\{2^4,2^5,2^7,3^3,3^4,5^2,7^2,11,13,19,23,31,37,47,53,73,97\}.\]
Using~\cite{Atl2,GAP4}, we have checked that there is no such group
with exactly four $p$-regular classes and only one group with
exactly five $p$-regular classes and this group is $\PSL_2(11)\cdot
2$.
\end{proof}

\begin{lemma}\label{lemma G is almost simple} Let $G$ be a finite group with $\bO_\infty(G)=1$. If $k_{p'}(G)\leq 5$, then
$G$ is almost simple.
\end{lemma}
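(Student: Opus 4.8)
The plan is to show that if $G$ is a finite group with $\bO_\infty(G)=1$ and $k_{p'}(G)\leq 5$, then $\Soc(G)$ must be a single non-abelian simple group, from which almost simplicity follows. Since $\bO_\infty(G)=1$, the socle $\Soc(G)$ coincides with the generalized Fitting subgroup and is therefore a direct product of non-abelian simple groups, say $\Soc(G)\cong S_1\times\cdots\times S_t$. Because the generalized Fitting subgroup contains its own centralizer, $G$ embeds in $\Aut(\Soc(G))$, and to prove $G$ is almost simple it suffices to show $t=1$. The first step is exactly the counting argument already used in the proof of Theorem~\ref{main1}: choosing a nontrivial $p$-regular element $x_i$ in each $S_i$ and forming the elements $g_i=(x_1,\dots,x_i,1,\dots,1)$, one sees these lie in distinct $G$-classes of $p$-regular elements, so $t\leq k_{p'}(G)\leq 5$.

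Having bounded $t$, I would rule out $t\geq 2$ by sharpening the count. The key observation is that each factor $S_i$, being a non-abelian simple group, contributes several $p$-regular classes, and these combine to force $k_{p'}(G)$ above $5$ unless $t=1$. More precisely, for a fixed $i$, the distinct $\Aut(S_i)$-classes of nontrivial $p$-regular elements of $S_i$ give rise, via the embeddings $x\mapsto(1,\dots,1,x,1,\dots,1)$, to distinct $G$-classes; so $G$ has at least $k^\ast_{p'}(S_i)$ classes coming from the $i$-th factor alone (together with the identity class). The crucial arithmetic input is that every non-abelian simple group has order divisible by at least three distinct primes, and in fact $k^\ast_{p'}(S_i)\geq 2$ for every such $S_i$ and every prime $p$, since $S_i$ always has nontrivial $p$-regular elements of at least two distinct element orders (its order being divisible by a prime $\neq p$ and, generically, by several). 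Combining the contributions from two or more factors with the mixed classes of the form $(x_1,\dots,x_i,1,\dots,1)$ pushes the total past $5$.

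The cleanest way to organize this is to note that for $t\geq 2$, in addition to the $t$ ``staircase'' classes $g_1,\dots,g_t$ and the identity, one can vary the element used in each slot among inequivalent $\Aut(S_i)$-classes to produce further distinct $G$-classes. Since each simple $S_i$ affords at least two $\Aut(S_i)$-classes of $p$-regular elements, a short case check (using $t\leq 5$ and the fact that $|S_i|$ is divisible by at least three primes, hence by at least two primes different from $p$) yields $k_{p'}(G)\geq 6$ whenever $t\geq 2$. I expect the main obstacle to be making this final counting argument genuinely tight and uniform: one must verify that no non-abelian simple group $S$ has only one $\Aut(S)$-class of $p$-regular elements, and handle the smallest cases (such as $p$-regular classes of $\Al_5$, $\PSL_2(7)$, and the other groups appearing in Theorem~\ref{B} and Lemma~\ref{list of nonsolvable groups with kp(G)=3}) carefully, since these sit right at the boundary $k_{p'}=3,4,5$. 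Once $t=1$ is established, $\Soc(G)=S_1$ is simple, $G$ embeds in $\Aut(S_1)$, and $G$ is almost simple, completing the proof.
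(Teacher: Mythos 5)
Your proposal is correct and rests on the same key fact as the paper's proof: every non-abelian simple group has order divisible by at least three distinct primes, hence contains nontrivial $p$-regular elements of two distinct prime orders, and combining such elements across two or more simple factors already yields at least six $p$-regular classes. The paper organizes the argument around a minimal normal subgroup $M\cong S^n$ rather than the whole socle: it lists the five nontrivial elements $(x_1,1,\dots,1)$, $(x_1,x_1,1,\dots,1)$, $(x_2,1,\dots,1)$, $(x_2,x_2,1,\dots,1)$, $(x_1,x_2,1,\dots,1)$ with $x_1,x_2\in S$ of distinct prime orders coprime to $p$ (these lie in distinct $G$-classes because conjugation permutes the factors and hence preserves the multiset of component orders), and then must run a second, parallel count to show $\bC_G(S)=1$. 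Your route through $\Soc(G)=F^{*}(G)$ dispenses with that second step, since $t=1$ immediately gives $\bC_G(\Soc(G))=1$ and $G\leq\Aut(S_1)$; the small price is that when the factors are non-isomorphic you cannot literally reuse the paper's five elements built from a single pair inside one copy of $S$. The ``short case check'' you leave open does close: taking $x_1,x_2\in S_1$ of distinct prime orders $r\neq s$ and $y_1,y_2\in S_2$ of distinct prime orders $u\neq v$, all coprime to $p$, the multisets of component orders of $(x_1,1),(x_2,1),(x_1,y_1),(x_1,y_2),(x_2,y_1),(x_2,y_2)$ take at least five distinct values, which together with the identity forces $k_{p'}(G)\geq 6$. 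Finally, the obstacle you anticipate---a simple group with only one $\Aut(S)$-class of nontrivial $p$-regular elements---cannot occur, precisely by the two-distinct-prime-orders observation, so no boundary cases such as $\Al_5$ or $\PSL_2(7)$ need separate treatment.
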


\begin{proof} Let $M$ be a minimal normal subgroup of $G$. Then $M$
is a direct product of $n$ copies of a non-abelian simple group, say
\[M\cong \underbrace{S\times S\times\cdots \times S}_{n \text{ times }}.\]
First we prove that $n=1$. Assume the contrary that $n\geq2$. Since
$|S|$ has at least three prime divisors, we can choose two elements
$x_1$ and $x_2$ in $S$ of prime order different from $p$ and
$\ord(x_1)\neq \ord(x_2)$. Consider the following $p$-regular
elements:
\[(x_1,1,1,\cdots,1), (x_1,x_1,1,\cdots,1), (x_2,1,1,\cdots,1),
(x_2,x_2,1,\cdots,1), (x_1,x_2,1,...,1).\] It is clear that these
elements belong to different classes of $G$. Therefore, $G$ has at
least six classes of $p$-regular elements (including the trivial
class), a contradiction. So $n=1$ or in other words, $M=S$ is
simple.

Next we prove that $\Centralizer_G(S)=1$. Again assume the contrary
that $\Centralizer_G(S)\neq 1$. Since $\Centralizer_G(S)\lhd G$ and
$\bO_\infty(G)=1$, $\Centralizer_G(S)$ is non-solvable and hence
$|\Centralizer_G(S)|$ has at least three prime divisors. As above,
we can choose two elements $y_1$ and $y_2$ in $\Centralizer_G(S)$ of
prime order different from $p$ and $\ord(y_1)\neq \ord(y_2)$. Then
the elements \[1,x_1, x_2, y_1, y_2, x_1 y_1\] belong to different
classes of $G$ and again we have a contradiction. Thus we have shown
that $\bC_G(S)=1$ so that $G\leq \Aut(S)$. So $G$ is almost simple
with socle $S$.
\end{proof}

Now we are ready to prove Theorem~\ref{B}.

\begin{proof}[Proof of Theorem~\ref{B}] By Lemma~\ref{lemma G is almost simple}, we know that
$G$ is almost simple with socle $S$. Since $G$ has exactly four
$p$-regular classes, the number of prime divisors of $|S|$ is at
most $4$. First, we assume that the number of prime divisors of
$|S|$ is $3$. There are only $8$ such simple groups
(see~\cite{Herzog} or~\cite[p.~12]{Gorenstein1}) and we have
\[S\in\{\Al_5,\PSL_2(7),\Al_6, \PSL_2(8), \PSL_2(17), \PSL_3(3),\PSU_3(3), \PSU_4(2)\}.\]
Using~\cite{Atl2}, one can easily find all almost simple groups with
socle $S$ in this list and exactly four $p$-regular classes.

Now we are left with the case where $|S|$ has exactly four prime
divisors. But this is done in Lemma~\ref{simple groups with 4 prime
divisors}(i). The proof is complete.
\end{proof}

\section*{Acknowledgement} The authors are grateful to the referee
for several helpful comments that have significantly improved the
exposition of the paper.


\end{document}